\documentclass[10pt,letterpaper,twoside]{amsart}
\usepackage[left=1.2in,right=1.2in,top=1in,bottom=1in,headheight=12pt,headsep=18pt]{geometry}
\usepackage{amsmath,amssymb,mathtools,mathrsfs,microtype}
\usepackage{fancyhdr,xcolor}
\definecolor{citeblue}{RGB}{50,155,215}
\usepackage[colorlinks=true,linkcolor=blue,citecolor=citeblue,urlcolor=blue]{hyperref}

\newtheorem{theorem}{Theorem}[section]
\newtheorem{lemma}[theorem]{Lemma}
\newtheorem{proposition}[theorem]{Proposition}
\newtheorem{corollary}[theorem]{Corollary}
\theoremstyle{definition}
\newtheorem{definition}[theorem]{Definition}

\newcommand{\F}{\mathbb F}
\newcommand{\R}{\mathbb R}
\newcommand{\E}{\mathbb E}
\newcommand{\Prob}{\mathbb P}
\newcommand{\PG}{\mathrm{PG}}
\newcommand{\tr}{\operatorname{tr}}
\newcommand{\Res}{\operatorname{Res}}
\newcommand{\norm}[1]{\lVert#1\rVert}
\newcommand{\bx}{\boldsymbol{x}}
\newcommand{\by}{\boldsymbol{y}}
\newcommand{\one}{\mathbf1}
\newcommand{\T}{\mathsf T}
\newcommand{\cS}{\mathscr S}
\newcommand{\cL}{\mathscr L}
\newcommand{\cC}{\mathscr C}

\AtBeginDocument{%
\setlength{\abovedisplayskip}{4pt plus 1pt minus 1pt}%
\setlength{\belowdisplayskip}{4pt plus 1pt minus 1pt}%
\setlength{\abovedisplayshortskip}{3pt plus 1pt minus 1pt}%
\setlength{\belowdisplayshortskip}{3pt plus 1pt minus 1pt}}

\title{Improved Explicit and Algorithmic\\Lower Bounds for $r(5,t)$}
\author{Amrit Kandasamy, Lawrence Zhou, Levi Segal}
\date{}

\hypersetup{pdftitle={Improved Explicit and Algorithmic Lower Bounds for r(5,t)},pdfauthor={Amrit Kandasamy, Lawrence Zhou, Levi Segal}}

\begin{document}

\begin{abstract}
We modify a construction of Brada\v{c} \cite{Bradac26} using a vertex ordering to give, for every prime power $q$, an explicit $K_5$ free graph with more than $q^7$ vertices and independence number below $600q^4$. This shows $$r(5,t)=\Omega(t^{7/4}).$$ Additionally, pruning a graph defined by the Coulter and Matthews polynomial $X^{14}$ \cite{CM}, we give a deterministic polynomial time construction proving
$$r(5,t)=\Omega\bigl(t^{20/11-\varepsilon}\bigr)$$
for every fixed $\varepsilon>0$. These results improve the constructive lower bound $\Omega(t^{5/3})$ of Kostochka, Pudl\'ak and R\"odl \cite{KPR}.
\end{abstract}

\maketitle
\thispagestyle{plain}

\section{Introduction}

The Ramsey number $r(s,t)$ is the least $N$ such that every graph on $N$ vertices contains a $K_s$ or an independent set of size $t$. So $\omega(H)<s$ and $\alpha(H)<t$ give $r(s,t)>|V(H)|$. Kostochka, Pudl\'ak and R\"odl \cite{KPR} proved the constructive lower bound $r(5,t)=\Omega(t^{5/3})$. We give two constructions.

\begin{theorem}\label{explicit}
Let $q\ge2$ be a prime power. There is an explicit graph $\Gamma_q$ such that
$$\omega(\Gamma_q)\le4,\,|V(\Gamma_q)|=(q^4+q^3+q^2+q+1)(q^3+q^2+q+1)$$
and $\alpha(\Gamma_q)<600q^4$. So
$$r(5,t)=\Omega(t^{7/4}).$$
\end{theorem}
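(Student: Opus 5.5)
The vertex count points to the construction: $(q^4+q^3+q^2+q+1)(q^3+q^2+q+1)$ is the number of incident (point, hyperplane) flags in $\PG(4,q)$. Equivalently, it counts pairs (point $x$, line through $x$) in a suitable incidence structure. Following Brada\v{c}'s construction, I would take $V(\Gamma_q)$ to be the flags $(p,H)$ with $p\in H$. I would join $(p,H)$ and $(p',H')$ when $p\in H'$ and $p'\in H$, with $p\ne p'$ and $H\ne H'$. Since $p'\in H$ only gives a symmetric relation when both conditions are imposed, the relation is symmetric by construction. Brada\v{c}'s unmodified graph is presumably $K_5$-free only after some randomness or with a weaker independence bound. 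The "vertex ordering" modification I would use is as follows. Fix a linear order on points, and keep an edge $\{(p,H),(p',H')\}$ only when the orientation induced by the order is consistent with a fixed rule. For example, require $p<p'$ exactly when some bilinear form $\langle p,H'\rangle$-type label is in a prescribed set. The aim is that every clique is forced to be "transitively ordered".

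\textbf{Clique bound.} Suppose $(p_1,H_1),\dots,(p_5,H_5)$ are pairwise adjacent. Then every $p_i$ lies in every $H_j$, so $\bigcap H_j$ contains the span $W$ of the $p_i$, and each $H_j\supseteq W$. In $\PG(4,q)$, if the five points span a hyperplane then all $H_j$ coincide, which is excluded. So $\dim W\le 3$ projectively, and the points are dependent. I would then use the ordering to rule out the dependent configurations. Order the clique by $p_1<\dots<p_5$ and look at the largest point $p_5$. The ordering rule should force $p_5$ to lie outside the span of the smaller points, or force the hyperplanes to be distinct in a way incompatible with containing a common plane. Iterating this gives a strictly increasing chain of spans, which can have length at most $4$ before it fills a hyperplane. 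That yields $\omega\le4$.

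\textbf{Independence bound.} Let $I$ be independent. Group vertices by their point $p$; each point carries $q^3+q^2+q+1$ flags, and there are $\sim q^4$ points. I would bound $|I|$ by double counting, using the eigenvalues of the point–hyperplane incidence graph of $\PG(4,q)$: the second eigenvalue is $q^{3/2}$ and the degree is $\sim q^3$. Write $A$ for the set of points used by $I$ and $B$ for the set of hyperplanes used. Non-adjacency forces few "mutual incidences" between $I$-flags. By the expander mixing lemma, the number of incidences between $A$ and $B$ is at most $|A||B|/q+q^{3/2}\sqrt{|A||B|}$. Combined with the ordering, which halves the effective edge set at most by a constant factor, this should give $|I|\le Cq^4$ with $C$ explicit, around $600$. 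One would set $I_p$ to be the flags of $I$ at $p$ and sum over $p$, using the Cauchy--Schwarz inequality.

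\textbf{Main obstacle and conclusion.} The hardest step is making the ordering rule simultaneously kill every $K_5$ and delete few enough edges that the mixing-lemma argument still bounds $\alpha$ by $O(q^4)$. I would try the rule "keep the edge iff the point-order agrees with the hyperplane-order" and check the clique cases by the dimension of $W$. Finally, $N\asymp q^7$ and $t=600q^4$ give $N\gtrsim t^{7/4}$. For general $t$ this extends by choosing a prime $q$ via Bertrand's postulate, which gives $r(5,t)=\Omega(t^{7/4})$.
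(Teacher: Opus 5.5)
There is a genuine gap: the graph you build is not $K_5$-free, and the ordering rule you suggest does not repair it. Take your symmetric adjacency ($p\in H'$ and $p'\in H$) and fix a line $L$ of $\PG(4,q)$. It has $q+1$ points and lies in $q^2+q+1$ hyperplanes, so pairing distinct points of $L$ with distinct hyperplanes through $L$ already gives a clique of size $q+1$. For $q\ge4$, take points $p_1<\dots<p_5$ on $L$ and hyperplanes $H_1<\dots<H_5$ through $L$. The flags $(p_i,H_i)$ are pairwise adjacent, and every pair has point order agreeing with hyperplane order, so your rule keeps this $K_5$. Rules based on labels like $\langle p,H'\rangle$ cannot help either, because every such label vanishes in this configuration. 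Your clique argument stops at exactly the step that fails: ``the ordering rule should force $p_5$ outside the span of the smaller points.''

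The missing idea is that Brada\v{c}'s relation is asymmetric and involves a \emph{non}-incidence. The digraph $D^*$ has the arc $(a,b)\to(a',b')$ iff $a\in H_{b'}$ and $a'\notin H_b$. The paper orders flags by their hyperplane coordinate $b$ and keeps only forward arcs. A $K_5$ in $\Gamma_q$ is then a transitive tournament in $D^*$, which Brada\v{c}'s Lemma 2.11 excludes.

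The same asymmetry drives the independence bound, and your sketch supplies no substitute for it. Take an independent set $\cS$ with fibres $A_b=\{a:(a,b)\in\cS\}$. The conditions $b\prec c$ and $A_b\cap H_c\ne\emptyset$ force $A_c\subseteq H_b$. Hence a large fibre $A_c$ lies in the plane $H_b\cap H_c$, and every relevant earlier $b$ lies in the polar $P_c$ of $\langle A_c\rangle$, a line or a plane. Consequently each point lies in at most $p_k\in\{q+1,q^2+q+1\}$ fibres of a given dyadic size class. Within each class the paper uses the mixing lemma as a \emph{lower} bound on weighted incidences between the first and second halves of the class. It compares this with the upper bound $2kp_kr$ coming from the polars.

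A bare upper bound $|A||B|/q+q^{3/2}\sqrt{|A||B|}$ on incidences between used points and used hyperplanes, as you propose, gives no control on $|I|$ without such a structural lemma. Your final step via Bertrand's postulate is fine.
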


The second construction gives exponents approaching $20/11$ by choosing edges deterministically.

\begin{theorem}\label{algorithmic}
For each fixed integer $k\ge3$ and all sufficiently large $t$ we construct in deterministic polynomial time a graph $H$ with
$$\omega(H)\le4,\,\alpha(H)<t,\,|V(H)|=\Omega\bigl(t^{20k/(11k+7)}\bigr).$$
So for every fixed $\varepsilon>0$
$$r(5,t)=\Omega\bigl(t^{20/11-\varepsilon}\bigr).$$
\end{theorem}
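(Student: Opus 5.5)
We would follow the Mubayi--Verstra\"ete paradigm: start from a dense, pseudorandom, $K_5$-free-friendly host structure in which every edge lies in few $K_4$'s, and then prune deterministically. Concretely, we take a finite field $\F_{q}$ with $q=3^m$ (so that the Coulter--Matthews polynomial $X^{14}$, which is planar over $\F_{3^m}$ for suitable odd $m$, is available \cite{CM}), and build a Cayley-type graph $G$ on $\F_q^{k'}$ (or a suitable product set) whose adjacency is defined through the planar function $x\mapsto x^{14}$. Planarity makes every difference map $x\mapsto (x+a)^{14}-x^{14}$ a bijection. This should yield two structural facts that play the role of the incidence geometry in Theorem~\ref{explicit}:
\begin{itemize}
\item[(i)] $G$ is $d$-regular with eigenvalues bounded by $O(\sqrt d)$ (computed via character sums of quadratic-type exponentials, which are Gauss sums of absolute value $\sqrt q$).
\item[(ii)] Every edge lies in few triangles, and the copies of $K_5$ form a structured family: each $K_4$ extends to $K_5$ in a controlled way, with $K_5$'s arranged in ``clusters'' that can be destroyed by deleting $O(1)$ edges per $K_4$.
\end{itemize}

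The pruning step is the heart of the argument. Instead of deleting edges at random (as in Kostochka--Pudl\'ak--R\"odl style arguments), we process $K_5$'s greedily along a fixed vertex ordering, as in the proof of Theorem~\ref{explicit}. For each $K_4$ we keep edges to at most one of its common neighbours, say the earliest. We then verify that the resulting graph $H$ is $K_5$-free. For the independence number, we show that every set $S$ of size $t$ spans, in $G$, at least $e_G(S)\ge d t^2/(2n)-O(\sqrt d\,t)$ edges by the expander mixing lemma. By the degree parameter $k$, we also show that the pruning removes at most a constant fraction of them. To control the removed edges inside $S$, we bound them by the number of $K_4$'s meeting $S$ in a prescribed pattern, and estimate this count using $k$-fold moments of codegrees. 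This is where the parameter $k$ enters: a $k$-th moment/Markov argument over the structure gives a loss of $t^{O(1/k)}$. Balancing $n$, $d$ and $t$ under the constraint $d t/n\gg \sqrt d$ then produces the exponent $n=\Omega(t^{20k/(11k+7)})$.

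Since everything is given by explicit field arithmetic and a greedy pass over the $K_4$'s (polynomially many), the construction runs in deterministic polynomial time. Letting $k\to\infty$ gives $r(5,t)=\Omega(t^{20/11-\varepsilon})$. For general large $t$ we pick $m$ with $3^m$ in the right range and pass to an induced subgraph, losing only constant factors.

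\emph{Main obstacle.} The hardest step is the independence number bound: showing that pruning cannot leave a large set $S$ independent. A union bound over all $\binom{n}{t}$ sets is unavailable without randomness. We would therefore try the strategy used for Theorem~\ref{explicit}: show that deleted edges inside $S$ are charged injectively to $K_4$'s with an earlier vertex in $S$, bound those by the spectral count, and use the $k$-th moment to handle the irregular sets. The precise bookkeeping yielding $20k/(11k+7)$ is the delicate part, and it is where the plan is most likely to need adjustment.
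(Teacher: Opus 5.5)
\textbf{There is a genuine gap.} Your final graph is a pruned copy of the pseudorandom host itself, and you certify its independence number by expander mixing on that host. This is the route of Theorem~\ref{explicit}, and on its own it stops at $t^{7/4}$. A $K_5$-free graph with $\lambda=O(\sqrt d)$ has $d=O(n^{6/7})$ (Sudakov--Szab\'o--Vu), so the bound $\alpha\lesssim n/\sqrt d$ is at best $n^{4/7}$. To reach $n=t^{20/11}$ with $\alpha\approx n/\sqrt d$ you need $d\ge n^{9/10}$. Then a typical edge of a pseudorandom host lies in about $n^3(d/n)^9=n^{2.1}$ copies of $K_5$. Destroying all of them while keeping a constant fraction of the edges inside every $t$-set is the entire difficulty, and nothing in your proposal supplies it. The rule you state does not even give $K_5$-freeness: a $K_4$ together with a common neighbour whose edges you keep is itself a $K_5$. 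For instance, a $K_5$ of the host each of whose $K_4$'s has no other common neighbour is left untouched. Likewise the exponent $20k/(11k+7)$ is asserted rather than derived, and ``$k$-fold moments of codegrees'' gives no mechanism that would produce it.

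\textbf{The missing idea is the Kostochka--Pudl\'ak--R\"odl lift.} The paper builds a graph $G$ with no $C_3$, $C_4$ or Petersen subgraph and small $\alpha(G)$. It then outputs $H(G)$, whose vertices are the edges of $G$, with disjoint edges adjacent when an edge of $G$ joins them. Lemma~\ref{superline} gives $\omega(H(G))\le4$ and $\alpha(H(G))\le\alpha(G)$. So the $K_5$ condition becomes the exclusion of three sparse graphs, and the vertex count becomes $|E(G)|$ rather than $|V(G)|$.

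The graph $G$ sits inside the $C_4$-free graph $B$ on $\F_q^2$ given by $y+y'=f(x+x')$. Edges are kept at rate $p=\Theta(q^{-1/7})$ under a bounded-degree conflict rule that kills triangles and marked $P_3$'s, and every edge lying in a Petersen copy is then removed. The rate $p$ is forced by $q^2p^{14}\le1$, since an edge lies in $O(q^2)$ Petersen embeddings.

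The union-bound obstacle you identify is avoided spectrally. The inequality $z^{\T}Gz\ge\frac p2z^{\T}B_0z-(\norm X+\norm Y)\norm z_2^2$ holds for all $z\ge0$ simultaneously. Here $\norm X+\norm Y=O(q^{3/7+1/k})$ comes from bounding $\E\tr X^{2k}$ and $\E\tr Y^{2k}$ and fixing the variables by conditional expectations. The $k$ in the theorem is this trace-moment order, and the loss $q^{1/k}$ is the factor $n^{1/(2k)}$. This yields $|E(G)|=\Omega(q^{20/7})$ and $\alpha(G)=O(q^{11/7+1/k})$, hence $|V(H)|=\Omega(t^{20k/(11k+7)})$.
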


We prove Theorem \ref{explicit} in Section \ref{constructive} and Theorem \ref{algorithmic} in Section \ref{deterministic}. We use adjacency matrices with operator norm $\norm{\cdot}$ and let $J$ be the matrix whose entries are all $1$. The two sections use different base graphs.

\section{The constructive bound}\label{constructive}

\subsection{The construction}

We modify a construction of Brada\v{c} \cite{Bradac26} using a vertex ordering. We represent points of $\PG(4,q)$ by $1$ dimensional subspaces of $\F_q^5$. Let $G=G(4,q)$ be the graph with possible loops on $\PG(4,q)$ in which $a,b\in V(G)$ are adjacent when $\boldsymbol{a}\cdot\boldsymbol{b}=0$. For $b\in\PG(4,q)$ let $H_b=\{a\in\PG(4,q):ab\in E(G)\}$. Then $H_b$ is a hyperplane of $\PG(4,q)$. Let
$$n=|V(G)|=q^4+q^3+q^2+q+1,\,d=|H_b|=q^3+q^2+q+1.$$
The digraph $D^*=D^*(4,q)$ of \cite{Bradac26} is defined by
$$V(D^*)=\{(a,b):ab\in E(G)\}$$
and
$$((a,b),(a',b'))\in A(D^*)\iff ab'\in E(G)\text{ and }a'b\notin E(G).$$
So
$$|V(D^*)|=nd=(q^4+q^3+q^2+q+1)(q^3+q^2+q+1)>q^7.$$
Fix a total order $\prec$ on $\PG(4,q)$. Extend $\prec$ to a total order on $V(D^*)$ such that $(a,b)$ precedes $(a',c)$ if $b\prec c$. Break ties arbitrarily when $b=c$.

\begin{definition}\label{ordered}
The graph $\Gamma_q=\Gamma_q^{\prec}$ has vertex set $V(D^*)$. If $(a,b)\prec(a',b')$ then $(a,b)$ and $(a',b')$ are adjacent in $\Gamma_q$ when $ab'\in E(G)$ and $a'b\notin E(G)$.
\end{definition}

\begin{lemma}[Brada\v{c} {\cite[Lemma 2.11]{Bradac26}}]\label{tournament}
The digraph $D^*$ contains no transitive tournament on $5$ vertices.
\end{lemma}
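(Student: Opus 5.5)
The plan is a rank argument over $\F_q^5$. Suppose for contradiction that $D^*$ contains a transitive tournament on $5$ vertices. Label its vertices $(a_1,b_1),\dots,(a_5,b_5)$ so that $((a_i,b_i),(a_j,b_j))\in A(D^*)$ whenever $i<j$, and fix nonzero representative vectors $\boldsymbol a_i,\boldsymbol b_i\in\F_q^5$. I would record everything in the $5\times5$ matrix $M=(m_{ij})$ with $m_{ij}=\boldsymbol a_i\cdot\boldsymbol b_j$. The three sources of information translate as follows.
\begin{itemize}
\item Membership $(a_i,b_i)\in V(D^*)$ gives $m_{ii}=0$.
\item The arc from $i$ to $j$ for $i<j$ gives $m_{ij}=0$.
\item The same arc gives $m_{ji}\neq0$.
\end{itemize}
So $M$ is strictly lower triangular, and every entry below the diagonal is nonzero. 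Note that loops of $G$ cause no trouble, since only the vanishing or nonvanishing of dot products is used.

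The key step is to use the first row and the last column of $M$ together. Consider the $4\times4$ submatrix $N$ with rows $2,\dots,5$ and columns $1,\dots,4$. Its entry in position $(i,j)$ with $i\in\{2,\dots,5\}$ and $j\in\{1,\dots,4\}$ is $m_{ij}$.
\begin{itemize}
\item If $j\ge i$, then $m_{ij}=0$, so $N$ is lower triangular.
\item Its diagonal consists of the entries $m_{i+1,i}$, which are nonzero.
\end{itemize}
Hence $N$ is invertible, and in particular $\boldsymbol a_2,\dots,\boldsymbol a_5$ are linearly independent.

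The last column of $M$ vanishes, so every $\boldsymbol a_i$ is orthogonal to the nonzero vector $\boldsymbol b_5$. All five $\boldsymbol a_i$ therefore lie in the $4$-dimensional subspace $\boldsymbol b_5^{\perp}$. Since $\boldsymbol a_2,\dots,\boldsymbol a_5$ are independent, they span $\boldsymbol b_5^{\perp}$. So we may write $\boldsymbol a_1=\sum_{i=2}^5 c_i\boldsymbol a_i$.

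Now use the first row of $M$, which vanishes. For $j=1,\dots,4$ this gives
$$0=m_{1j}=\sum_{i=2}^5 c_i m_{ij},$$
that is, $c^{\T}N=0$. Invertibility of $N$ forces $c=0$, hence $\boldsymbol a_1=0$. This contradicts $a_1$ being a point of $\PG(4,q)$.

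The only place needing care is the index bookkeeping: checking that the triangular shape of $N$ really uses $m_{ij}=0$ for $i\le j$, and that its diagonal lies strictly below the diagonal of $M$. Everything else is linear algebra that does not depend on $q$ or on the characteristic.
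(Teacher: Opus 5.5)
Your proof is correct. The vertex and arc conditions make $M$ vanish on and above the diagonal, with nonzero entries $m_{i+1,i}$ just below it. Consequently the block $N$ is triangular with nonzero diagonal, and since all five $\boldsymbol a_i$ lie in the hyperplane $\boldsymbol b_5^{\perp}$, the vanishing first row forces $\boldsymbol a_1=0$.

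The paper does not prove this lemma; it only cites Brada\v{c}. So your argument is a self-contained replacement for that citation, and it uses the standard triangular linear-algebra idea.

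You can shorten it by working with the $\boldsymbol b_j$ instead of the $\boldsymbol a_i$. Suppose $\sum_j\lambda_j\boldsymbol b_j=0$. Dotting with $\boldsymbol a_i$ gives $\sum_{j<i}\lambda_j m_{ij}=0$. Taking $i=2,3,4,5$ in turn gives $\lambda_1=\lambda_2=\lambda_3=\lambda_4=0$, and then $\lambda_5=0$ because $\boldsymbol b_5\neq0$. Hence $\boldsymbol b_1,\dots,\boldsymbol b_5$ is a basis of $\F_q^5$. Since the first row of $M$ vanishes, $\boldsymbol a_1$ is orthogonal to this whole basis, so $\boldsymbol a_1=0$ directly. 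This avoids both the block $N$ and the step through $\boldsymbol b_5^{\perp}$.

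Either version works verbatim in $\F_q^{t+1}$. It shows that $D^*(t,q)$ has no transitive tournament on $t+1$ vertices, which is the form needed for the $D^*(3,q)$ variant mentioned in the paper's closing note.
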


\begin{corollary}\label{clique}
The graph $\Gamma_q$ is $K_5$ free.
\end{corollary}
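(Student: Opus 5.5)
The plan is to show that every clique in $\Gamma_q$ arises from a transitive tournament in $D^*$ on the same vertex set, and then apply Lemma~\ref{tournament}.

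Let $v_1\prec v_2\prec\dots\prec v_5$ be five vertices forming a clique in $\Gamma_q$, with $v_i=(a_i,b_i)$. By Definition~\ref{ordered}, for $i<j$ the edge $v_iv_j$ of $\Gamma_q$ means $a_ib_j\in E(G)$ and $a_jb_i\notin E(G)$. This is exactly the condition $(v_i,v_j)\in A(D^*)$. So for every pair $i<j$ the arc $v_i\to v_j$ lies in $D^*$. Consequently $D^*$ restricted to $\{v_1,\dots,v_5\}$ contains the transitive tournament determined by the linear order $v_1,\dots,v_5$.

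We also need to check that $D^*[\{v_1,\dots,v_5\}]$ is a tournament, meaning it has no arc in both directions. The arc $(v_j,v_i)$ would require $a_jb_i\in E(G)$, which contradicts $a_jb_i\notin E(G)$. So the induced subdigraph is precisely this transitive tournament. This holds regardless of whether Lemma~\ref{tournament} is read as forbidding subgraphs or induced subgraphs.

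Lemma~\ref{tournament} forbids such a transitive tournament, which is a contradiction. Hence $\omega(\Gamma_q)\le4$.

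The only point needing care is that the vertices are distinct, so the order $\prec$ is strict and every pair has a well-defined direction. Note that the refinement of $\prec$ by the $b$-coordinate plays no role here; it is only needed for the independence bound. I expect no real obstacle in this argument.
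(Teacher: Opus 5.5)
Your proposal is correct and is essentially the paper's proof: order the five clique vertices by $\prec$, read off $(v_i,v_j)\in A(D^*)$ for all $i<j$ from Definition~\ref{ordered}, and contradict Lemma~\ref{tournament}. Your extra check that no reverse arcs occur, so that the induced subdigraph is exactly the transitive tournament, is valid but not needed in the paper's argument.
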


\begin{proof}
Suppose $f_1\prec f_2\prec\cdots\prec f_5$ are the vertices of a $K_5$ in $\Gamma_q$. Definition \ref{ordered} gives $(f_i,f_j)\in A(D^*)$ for every $i<j$. So these $5$ vertices contain a transitive tournament in $D^*$, contradicting Lemma \ref{tournament}.
\end{proof}

\subsection{Independence number}

Let $\cS$ be an independent set in $\Gamma_q$. For each $b\in\PG(4,q)$ let $A_b=\{a\in H_b:(a,b)\in\cS\}$. Then
$$|\cS|=\sum_{b\in\PG(4,q)}|A_b|.$$

\begin{lemma}\label{incidence}
If $b\prec c$ then
$$A_b\cap H_c\neq\emptyset\implies A_c\subseteq H_b.$$
\end{lemma}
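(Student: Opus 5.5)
Suppose $b\prec c$ and pick $a\in A_b\cap H_c$, so that $(a,b)\in\cS$ and $ac\in E(G)$. Take any $a'\in A_c$; then $(a',c)\in\cS$. Our aim is to show $a'\in H_b$, that is, $a'b\in E(G)$. We argue by contradiction and use the fact that $\cS$ is independent in $\Gamma_q$.

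The first step is to fix the order of the two vertices. Since $b\prec c$, the extended order on $V(D^*)$ puts $(a,b)$ before $(a',c)$; this holds regardless of how ties were broken, because the second coordinates $b$ and $c$ are distinct. So Definition \ref{ordered} applies with $(a,b)$ as the earlier vertex and $(a',c)$ as the later one. In that case the two vertices are adjacent in $\Gamma_q$ exactly when $ac\in E(G)$ and $a'b\notin E(G)$.

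The second step is the contradiction. We already have $ac\in E(G)$, since $a\in H_c$. If $a'\notin H_b$, then $a'b\notin E(G)$, because $G$ is symmetric: the relation $\boldsymbol a\cdot\boldsymbol b=0$ is symmetric, so $a'\in H_b$ is the same as $a'b\in E(G)$. Both conditions for adjacency would then hold, making $(a,b)$ and $(a',c)$ adjacent in $\Gamma_q$. That contradicts the independence of $\cS$. Hence $a'\in H_b$, and since $a'$ was arbitrary, $A_c\subseteq H_b$.

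There is no real obstacle here. The only point needing care is the one in the first step: the ordering condition must be read with $(a,b)$ as the earlier vertex, and this is guaranteed because $b\neq c$, so the tie-breaking rule never enters.
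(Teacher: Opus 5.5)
Your proposal is correct and is essentially the paper's proof: choose $a\in A_b\cap H_c$, note that $(a,b)\prec(a',c)$ because $b\prec c$, and observe that any $a'\in A_c\setminus H_b$ would make $(a,b)$ and $(a',c)$ adjacent in $\Gamma_q$, contradicting the independence of $\cS$. Your remarks on why the tie-breaking never matters and on the symmetry of $G$ are accurate; they only spell out what the paper leaves implicit.
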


\begin{proof}
Choose $a\in A_b\cap H_c$. If $a'\in A_c\setminus H_b$ then $(a,b)\prec(a',c)$, $ac\in E(G)$ and $a'b\notin E(G)$. The two vertices are adjacent in $\Gamma_q$, contradiction.
\end{proof}

\begin{lemma}\label{small}
We have
$$\sum_{\substack{c\in\PG(4,q)\\ A_b\cap H_c=\emptyset\text{ for every }b\prec c}}|A_c|\le n$$
and
$$\sum_{\substack{c\in\PG(4,q)\\ |A_c|\le1\\ A_b\cap H_c\neq\emptyset\text{ for some }b\prec c}}|A_c|\le n.$$
\end{lemma}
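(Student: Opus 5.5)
Both bounds are counting arguments. The first uses only the defining condition of the index set together with the inclusion $A_c\subseteq H_c$, and the second is immediate. Lemma \ref{incidence} is not needed for either.

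\emph{First sum.} Let $C$ be the set of $c\in\PG(4,q)$ such that $A_b\cap H_c=\emptyset$ for every $b\prec c$. I claim the sets $A_c$, $c\in C$, are pairwise disjoint subsets of $\PG(4,q)$. Take distinct $c,c'\in C$ and suppose without loss of generality that $c\prec c'$.
\begin{itemize}
\item By definition $A_{c'}\subseteq H_{c'}$.
\item Since $c'\in C$ and $c\prec c'$, applying the defining condition of $C$ with $b=c$ gives $A_c\cap H_{c'}=\emptyset$.
\end{itemize}
Together these give $A_c\cap A_{c'}\subseteq A_c\cap H_{c'}=\emptyset$. So the family $\{A_c\}_{c\in C}$ consists of pairwise disjoint subsets of a set of size $n$, and
$$\sum_{c\in C}|A_c|=\Bigl|\bigcup_{c\in C}A_c\Bigr|\le|\PG(4,q)|=n.$$

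\emph{Second sum.} Every term satisfies $|A_c|\le1$, and $c$ ranges over a subset of $\PG(4,q)$, which has $n$ elements. Hence the sum is at most $n$. The extra condition that $A_b\cap H_c\neq\emptyset$ for some $b\prec c$ plays no role here.

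The only point needing care is the direction of the order in the first bound. The hypothesis on $c'\in C$ controls the sets $A_b$ for earlier $b$, and that is exactly what yields disjointness from the later set $A_{c'}\subseteq H_{c'}$.
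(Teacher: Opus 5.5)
Your proof is correct and matches the paper's argument: for $c\prec c'$ both in the first index set, the defining condition gives $A_c\cap H_{c'}=\emptyset$, and together with $A_{c'}\subseteq H_{c'}$ this makes the sets pairwise disjoint. The second sum has at most $n$ terms, each at most $1$, and, as you note, the paper likewise does not use Lemma \ref{incidence} here.
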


\begin{proof}
If $c\prec e$ both occur in the first sum then $A_c\cap H_e=\emptyset$. Since $A_e\subseteq H_e$ we get $A_c\cap A_e=\emptyset$. So these sets are pairwise disjoint and their total size is at most $n$. The second sum has at most $n$ terms, each at most $1$.
\end{proof}

Let
$$\cL=\{c\in\PG(4,q):|A_c|\ge2\text{ and }A_b\cap H_c\neq\emptyset\text{ for some }b\prec c\}.$$
Fix $c\in\cL$ and choose $b\prec c$ with $A_b\cap H_c\neq\emptyset$. Lemma \ref{incidence} gives $A_c\subseteq H_b\cap H_c$. Now $b\neq c\implies H_b\neq H_c$, so $H_b\cap H_c$ is a projective plane. Write $\langle A_c\rangle$ for the projective subspace spanned by $A_c$ and let
$$P_c=\{z\in\PG(4,q):A_c\subseteq H_z\},$$
the polar of $\langle A_c\rangle$. Since $|A_c|\ge2$ and $\langle A_c\rangle\subseteq H_b\cap H_c$, the subspace $\langle A_c\rangle$ is a line or a plane, so $P_c$ is a plane or a line, respectively. A line has only $q+1$ points, so $\langle A_c\rangle$ is a plane whenever $|A_c|\ge q+2$. Hence
$$|A_c|\le q^2+q+1,\,|P_c|\le q^2+q+1,\,|P_c|=q+1\text{ if }|A_c|\ge q+2.$$
Also every $b\prec c$ with $A_b\cap H_c\neq\emptyset$ lies in $P_c$ by Lemma \ref{incidence}.

\begin{lemma}\label{multiplicity}
Let $a\in\PG(4,q)$ and let $c_1\prec c_2\prec\cdots\prec c_r$ be elements of $\cL$ with $a\in A_{c_i}$ for every $i$. Then $r\le|P_{c_r}|$.
\end{lemma}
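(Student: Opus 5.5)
We plan to show that $c_1,\dots,c_{r-1}$ are distinct points of $P_{c_r}$. Together with $c_r$ itself this would give $r$ points, so we also want $c_r\in P_{c_r}$, or some other way to account for the last one. The key tool is Lemma \ref{incidence}, applied to each pair $c_i\prec c_r$.

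Fix $i<r$. Since $a\in A_{c_i}$ and $a\in A_{c_r}\subseteq H_{c_r}$, we have $a\in A_{c_i}\cap H_{c_r}$, so this intersection is nonempty. As $c_i\prec c_r$, Lemma \ref{incidence} gives $A_{c_r}\subseteq H_{c_i}$. By definition of $P_{c_r}$ this means $c_i\in P_{c_r}$. (This is the remark after the definition of $P_c$: every $b\prec c$ with $A_b\cap H_c\neq\emptyset$ lies in $P_c$.) The $c_i$ are distinct because the order is strict, so $P_{c_r}$ contains the $r-1$ distinct points $c_1,\dots,c_{r-1}$. That gives $r-1\le|P_{c_r}|$.

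To reach $r\le|P_{c_r}|$ I would check whether $c_r\in P_{c_r}$, i.e.\ whether $A_{c_r}\subseteq H_{c_r}$. This holds by construction: $A_b\subseteq H_b$, since $A_b=\{a\in H_b:(a,b)\in\cS\}$. So $c_r\in P_{c_r}$, and $c_r$ differs from every $c_i$ with $i<r$. Hence $\{c_1,\dots,c_r\}\subseteq P_{c_r}$ is a set of $r$ distinct points, and $r\le|P_{c_r}|$.

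There is no real obstacle here. The only point needing care is the last one, namely noticing that $c_r$ itself lies in its own polar set $P_{c_r}$, because $A_{c_r}\subseteq H_{c_r}$. Without this the bound would fall short by one. Loops in $G$ are permitted, so $c_r\in H_{c_r}$ need not hold, but only $A_{c_r}\subseteq H_{c_r}$ is used, and that holds by definition.
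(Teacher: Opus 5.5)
Your proof is correct and is the same as the paper's argument. For $i<r$, Lemma~\ref{incidence} applied to $a\in A_{c_i}\cap H_{c_r}$ puts $c_i$ in $P_{c_r}$. Then $c_r\in P_{c_r}$ because $A_{c_r}\subseteq H_{c_r}$ by definition, which gives $r$ distinct points of $P_{c_r}$.
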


\begin{proof}
For $i<r$ we have $a\in A_{c_i}\cap H_{c_r}$ since $A_{c_r}\subseteq H_{c_r}$. Lemma \ref{incidence} gives $A_{c_r}\subseteq H_{c_i}$ so $c_i\in P_{c_r}$. Also $c_r\in P_{c_r}$. Hence $c_1,\ldots,c_r$ are $r$ distinct points of $P_{c_r}$.
\end{proof}

\begin{lemma}\label{mixing}
Let $B$ be the adjacency matrix of $G(4,q)$. For all $\bx,\by\in\R^{\PG(4,q)}$,
$$\left|\bx^{\T}B\by-\frac dn\left(\sum_ax_a\right)\left(\sum_by_b\right)\right|\le q^{3/2}\norm{\bx}_2\norm{\by}_2.$$
\end{lemma}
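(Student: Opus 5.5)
The plan is to prove Lemma \ref{mixing} by the standard expander mixing argument: compute the spectrum of $B$ exactly, then subtract off the contribution of the all-ones direction.

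First, $B$ is symmetric (the bilinear form $\boldsymbol a\cdot\boldsymbol b$ is symmetric), and every row has exactly $|H_b|=d$ ones, loops included. Hence $\one$ is an eigenvector with eigenvalue $d$.

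Next I compute $B^2$. The entry $(B^2)_{ab}$ counts the points $z$ with $z\in H_a\cap H_b$. If $a=b$ this is $|H_a|=d$. If $a\ne b$, then $H_a\cap H_b$ is a projective plane, which has $q^2+q+1$ points. So
$$B^2=(d-(q^2+q+1))I+(q^2+q+1)J=q^3I+(q^2+q+1)J .$$
On the orthogonal complement $\one^\perp$ we have $J=0$, so $B^2=q^3I$ there. Since $B$ is symmetric and preserves $\one^\perp$ (because $\one$ is an eigenvector), every eigenvalue of $B$ on $\one^\perp$ equals $\pm q^{3/2}$. Thus $\norm{B|_{\one^\perp}}=q^{3/2}$. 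As a sanity check on $\one$ itself: $d^2=q^3+(q^2+q+1)n$, which holds by direct expansion.

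Finally, decompose $\bx=\frac{\sum_a x_a}{n}\one+\bx'$ and $\by=\frac{\sum_b y_b}{n}\one+\by'$ with $\bx',\by'\perp\one$.
\begin{itemize}
\item The cross terms vanish, since $\one^\T B\by'=d\,\one^\T\by'=0$, and similarly for $\bx'$.
\item The main term is $\frac{(\sum x)(\sum y)}{n^2}\one^\T B\one=\frac dn(\sum x)(\sum y)$.
\item The remainder is $\bx'^\T B\by'$. It is bounded by $q^{3/2}\norm{\bx'}_2\norm{\by'}_2\le q^{3/2}\norm{\bx}_2\norm{\by}_2$, using Cauchy--Schwarz and orthogonal projection.
\end{itemize}
The only step needing care is the count $|H_a\cap H_b|=q^2+q+1$ for $a\ne b$. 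This holds because distinct points have linearly independent representatives, so their two hyperplanes are distinct and meet in a codimension-2 subspace. That step gives the clean identity $B^2=q^3I+(q^2+q+1)J$, and everything else is routine.
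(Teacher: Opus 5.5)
Your proof is correct and follows the paper's argument: both split $\bx,\by$ into their $\one$ and $\one^{\perp}$ components and bound the remainder term by $q^{3/2}$ on $\one^{\perp}$. The only difference is that you derive the eigenvalues $\pm q^{3/2}$ yourself from $B^2=q^3I+(q^2+q+1)J$, whereas the paper takes them from Lemma 2.1 of \cite{Bradac26} as $\sqrt{d-\lambda}$ with $\lambda=q^2+q+1$, which is the same computation.
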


\begin{proof}
In the proof of Lemma 2.1 of \cite{Bradac26}, the nontrivial eigenvalues of $G(t,q)$ have absolute value $\sqrt{d-\lambda}$, where
$$d=\frac{q^t-1}{q-1},\,\lambda=\frac{q^{t-1}-1}{q-1}.$$
Taking $t=4$ gives $d=q^3+q^2+q+1$ and $\lambda=q^2+q+1$, so every nontrivial eigenvalue of $B$ has absolute value $q^{3/2}$. Write
$$\bx=\frac{\sum_ax_a}{n}\one+\bx_0,\,\by=\frac{\sum_by_b}{n}\one+\by_0$$
with $\bx_0,\by_0\perp\one$. Since $B\one=d\one$ we have
$$\bx^{\T}B\by=\frac dn\left(\sum_ax_a\right)\left(\sum_by_b\right)+\bx_0^{\T}B\by_0.$$
Since every eigenvalue of $B$ on $\one^{\perp}$ has absolute value $q^{3/2}$,
$$|\bx_0^{\T}B\by_0|\le q^{3/2}\norm{\bx_0}_2\norm{\by_0}_2\le q^{3/2}\norm{\bx}_2\norm{\by}_2.$$
\end{proof}

\begin{proposition}\label{large}
We have
$$\sum_{c\in\cL}|A_c|\le96(q+1)^2(q^2+q+1)+24(2+\sqrt2)(q^2+q)^{3/2}(q^2+q+1)^{1/2}.$$
\end{proposition}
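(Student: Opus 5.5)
The plan is a double count of incidences $(a,c)$ with $c\in\cL$ and $a\in A_c$. Every such incidence is an edge $ac$ of $G(4,q)$, because $A_c\subseteq H_c$. We bound the number of these edges from above with Lemma \ref{mixing}, and control how many sets $A_c$ a single point can lie in with Lemma \ref{multiplicity}.

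First split $\cL=\cL'\cup\cL''$. Here $\cL''$ consists of those $c$ with $|A_c|\ge q+2$, so $|P_c|=q+1$. The set $\cL'$ consists of those with $2\le|A_c|\le q+1$, where only $|P_c|\le q^2+q+1$ is available. Within each part, split dyadically:
\[
\cL_i=\{c:2^i\le|A_c|<2^{i+1}\},\qquad I_i=\sum_{c\in\cL_i}|A_c|,\qquad U_i=\bigcup_{c\in\cL_i}A_c .
\]
Lemma \ref{multiplicity}, applied to the chain of all $c\in\cL_i$ containing a fixed point $a$, shows that $a$ lies in at most $M$ of these sets. Here $M=q+1$ on $\cL''$ and $M=q^2+q+1$ on $\cL'$. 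Hence
\[
2^i|\cL_i|\le I_i\le M|U_i|,\qquad I_i<2^{i+1}|\cL_i|.
\]

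Next apply Lemma \ref{mixing} with $\bx=\one_{U_i}$ and $\by=\one_{\cL_i}$. Since each counted incidence is an edge of $G$,
\[
I_i\le \frac dn|U_i||\cL_i|+q^{3/2}\sqrt{|U_i||\cL_i|}.
\]
Note that $d/n\approx 1/q$. Also $|U_i|\le I_i$, and $|\cL_i|\le I_i/2^i$. So whenever $|\cL_i|$ is small compared with $n/d$, the first term is at most $I_i/2$ and can be absorbed into the left side. The remaining inequality then gives
\[
I_i\lesssim q^{3}|\cL_i|\lesssim q^{3}\,I_i/2^i,
\]
or, after substituting $|U_i|\le n$ and $|\cL_i|\le I_i/2^i$, a bound of the shape $I_i\lesssim q^{3}\cdot n\cdot 2^{-i}$.

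Concretely, the target for each dyadic class is
\[
I_i\le C\,q^{3/2}\,2^{-i/2}\sqrt{M\cdot(\text{size of the relevant }P_c)\cdots}.
\]
Summing the geometric factor $\sum_i 2^{-i/2}=1/(1-2^{-1/2})=2+\sqrt2$ should produce the second term
\[
24(2+\sqrt2)(q^2+q)^{3/2}(q^2+q+1)^{1/2}.
\]
The constant $24$ would come from the absorption and from the factor $2$ in the dyadic window. The classes in which the main term $\tfrac dn|U_i||\cL_i|$ dominates instead give at most a constant times $(q+1)^2(q^2+q+1)$ per part. These are the classes where $|U_i|$ is comparable to $M\cdot(q+1)^2$, using $|A_c|\le q^2+q+1$ and $|P_c|\le q^2+q+1$. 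Summing over the $O(1)$ such dominant regimes should give the term $96(q+1)^2(q^2+q+1)$.

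The main obstacle is making the main term $\tfrac dn|U_i||\cL_i|$ harmless. The trivial bound $|U_i|\le n$ loses a factor of $q$. So the step I would try first is to use the ordering more strongly: restrict to pairs $(a,c)$ where $c$ is the $\prec$-last element of $\cL_i$ containing $a$. For such a pair, the earlier $c_j$ containing $a$ all lie in the small set $P_c$. This localizes $U_i$ inside $\bigcup_{c}H_c$ restricted to few points, and gives the needed bound $|U_i|\lesssim(q+1)^2(q^2+q+1)$ in the dominant regime. If that fails, I would instead weight $\bx$ by the multiplicity of $a$ and apply Lemma \ref{mixing} to the weighted vector, using Cauchy--Schwarz together with $\sum_a m_a^2\le M\sum_a m_a$.
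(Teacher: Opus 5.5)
There is a genuine gap. You count the wrong incidences, and you use the mixing lemma in the wrong direction. The pairs $(a,c)$ with $a\in A_c$ are automatically edges of $G$, since $A_c\subseteq H_c$. An upper bound $e(U_i,\cL_i)\le\frac dn|U_i||\cL_i|+q^{3/2}\sqrt{|U_i||\cL_i|}$ can only constrain $I_i$ if these incidences are denser than average, and nothing forces that.

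The information you actually use is $A_c\subseteq H_c$, the dyadic sizes, and multiplicity at most $M$ from Lemma \ref{multiplicity}. All of this is consistent with $\sum|A_c|\asymp q^5$. The point--hyperplane incidence graph is $d$-regular bipartite, so it splits into perfect matchings. Take $2^i$ of them, where $2^i$ is the largest power of two with $2^i\le q+1$. This gives sets $A_c$ of size $2^i$, with every point in $2^i\le q^2+q+1$ sets, for a total of $n2^i\asymp q^5$.

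Your own computations show the problem. After absorbing the main term you get either $2^i\lesssim q^3$, which is vacuous, or $I_i\lesssim q^3n2^{-i}$, which is far above $q^4$. The per-class target with the factor $2^{-i/2}$, and the constants $24$ and $96$, are read off the statement rather than derived. Restricting to the $\prec$-last set containing $a$ just restates Lemma \ref{multiplicity}; it gives no bound on $|U_i|$.

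The missing idea is to apply Lemma \ref{incidence} to cross incidences between an earlier set and a later hyperplane. If $b\prec c$ and $A_b\cap H_c\neq\emptyset$, then $b\in P_c$. So each later $c$ meets at most $p_k$ earlier sets $A_b$, where $p_k=q+1$ if $k\ge q+2$ and $p_k=q^2+q+1$ otherwise.

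Order a dyadic class $\cC_k$ of size $r$ by $\prec$. Let $X$ be its first half and $Y$ the rest. Let $x_a$ be the multiplicity of $a$ among the $A_b$ with $b\in X$. Then $x_a\le p_k$ by Lemma \ref{multiplicity}, and so $\sum_a x_a^2\le p_k\sum_a x_a\le p_kkr$.

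Now count $I=\sum_{b\in X}\sum_{c\in Y}|A_b\cap H_c|=\bx^{\T}B\one_Y$. Lemma \ref{mixing} gives a lower bound, $I\ge\frac{kr^2}{6(q+1)}-q^{3/2}r\sqrt{p_kk}$. The structural fact gives an upper bound, $I<2kp_kr$. Together these give $r<12(q+1)p_k+6(q+1)q^{3/2}\sqrt{p_k/k}$, and hence $\sum_{c\in\cC_k}|A_c|<2kr$.

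Your fallback of weighting by multiplicity with $\sum m_a^2\le M\sum m_a$ is the right tool, but only once it is applied to these cross incidences as a lower bound.

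Finally, sum over powers of two using $\sum_{k\le K}k<2K$ and $\sum_{k\le K}\sqrt k<(2+\sqrt2)\sqrt K$, taking $K=q+1$ and $K=q^2+q+1$ for the two ranges. This yields the stated constants. The factor $2+\sqrt2$ comes from a $\sqrt k$ term that grows with $k$, summed downward from the top class, not from a factor decaying in the class index.
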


\begin{proof}
For each power $k$ of $2$ with $2\le k\le q^2+q+1$, let $\cC_k=\{c\in\cL:k\le|A_c|<2k\}$, say $r=|\cC_k|$, and put
$$p_k=\begin{cases}q+1&\text{if }k\ge q+2,\\ q^2+q+1&\text{if }k\le q+1.\end{cases}$$
These classes cover $\cL$ and the bounds above give $|P_c|\le p_k$ for every $c\in\cC_k$. We first show that
$$r<12(q+1)p_k+6(q+1)q^{3/2}\sqrt{p_k/k}.$$
This is immediate when $r\le1$, so assume $r\ge2$. In the order induced by $\prec$, let $X$ be the first $\lfloor r/2\rfloor$ elements of $\cC_k$ and let $Y$ be the remaining $\lceil r/2\rceil$ elements. Define $x_a=|\{b\in X:a\in A_b\}|$. Then
$$\frac{kr}{3}\le k\lfloor r/2\rfloor\le\sum_ax_a=\sum_{b\in X}|A_b|\le2k\lfloor r/2\rfloor\le kr.$$
Applying Lemma \ref{multiplicity} to the $b\in X$ with $a\in A_b$ gives $x_a\le p_k$. Hence
$$\sum_ax_a^2\le p_k\sum_ax_a\le p_kkr.$$
Count incidences with multiplicity between the points in the sets indexed by $X$ and the hyperplanes indexed by $Y$:
$$I=\sum_ax_a|\{c\in Y:a\in H_c\}|=\sum_{b\in X}\sum_{c\in Y}|A_b\cap H_c|.$$
Apply Lemma \ref{mixing} with $\bx=(x_a)$ and $\by=\one_Y$, where $\one_Y$ is the indicator vector of $Y$. Since $d/n\ge1/(q+1)$ and $r/2\le\lceil r/2\rceil\le r$,
$$I\ge\frac1{q+1}\left(\sum_ax_a\right)\lceil r/2\rceil-q^{3/2}\sqrt{\left(\sum_ax_a^2\right)\lceil r/2\rceil}\ge\frac{kr^2}{6(q+1)}-q^{3/2}r\sqrt{p_kk}.$$
For $b\in X$ and $c\in Y$ we have $b\prec c$, so $A_b\cap H_c\neq\emptyset$ implies $b\in P_c$. Thus for each $c\in Y$ at most $p_k$ choices of $b$ contribute to $I$, and every contribution is less than $2k$. Therefore
$$I<2kp_k\lceil r/2\rceil\le2kp_kr.$$
Combining the two bounds for $I$ and dividing by $kr$ gives the bound on $r$. It follows that
$$\sum_{c\in\cC_k}|A_c|\le2kr<24(q+1)p_kk+12(q+1)q^{3/2}\sqrt{p_kk}.$$
For powers of $2$ with $k\le K$ we have $\sum_kk<2K$ and $\sum_k\sqrt k<(2+\sqrt2)\sqrt K$. Taking $K=q+1$ for $k\le q+1$ and $K=q^2+q+1$ for $k\ge q+2$, the sum over each of these two ranges satisfies
$$\sum_kp_kk<2(q+1)(q^2+q+1),\,\sum_k\sqrt{p_kk}<(2+\sqrt2)\sqrt{(q+1)(q^2+q+1)}.$$
Adding the two ranges proves the result.
\end{proof}

\begin{theorem}\label{independence}
For every prime power $q\ge2$,
$$\alpha(\Gamma_q)\le2n+96(q+1)^2(q^2+q+1)+24(2+\sqrt2)(q^2+q)^{3/2}(q^2+q+1)^{1/2}<600q^4.$$
\end{theorem}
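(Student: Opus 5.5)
The plan is to fix an arbitrary independent set $\cS$ of $\Gamma_q$ and split the points $c\in\PG(4,q)$ into three classes according to the sets $A_c$. The first class consists of the $c$ with $A_b\cap H_c=\emptyset$ for every $b\prec c$. The second consists of the $c$ with $|A_c|\le1$ and $A_b\cap H_c\neq\emptyset$ for some $b\prec c$. The third class is $\cL$. Every $c$ lies in one of these classes: either no earlier $b$ meets $H_c$, or some does, and in the latter case $|A_c|\le1$ or $|A_c|\ge2$. A $c$ with $A_c=\emptyset$ contributes nothing wherever we put it. Hence
$$|\cS|=\sum_c|A_c|$$
is at most the sum of the two sums in Lemma \ref{small} plus $\sum_{c\in\cL}|A_c|$.

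Lemma \ref{small} bounds the first two sums by $n$ each. Proposition \ref{large} bounds the third by $96(q+1)^2(q^2+q+1)+24(2+\sqrt2)(q^2+q)^{3/2}(q^2+q+1)^{1/2}$. Since $\cS$ was arbitrary, this gives the first inequality of the theorem.

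The only remaining work, and the main (mild) obstacle, is the numerical estimate showing that the bound is less than $600q^4$ for all $q\ge2$. My approach is to bound each factor by a multiple of the appropriate power of $q$, using $q\ge2$:
\begin{itemize}
\item $n=q^4+q^3+q^2+q+1\le q^4(1+\tfrac12+\tfrac14+\tfrac18+\tfrac1{16})<2q^4$, so $2n<4q^4$;
\item $(q+1)^2\le\tfrac94q^2$ and $q^2+q+1\le\tfrac74q^2$, so the first term is at most $96\cdot\tfrac94\cdot\tfrac74q^4=378q^4$;
\item $q^2+q\le\tfrac32q^2$, so $(q^2+q)^{3/2}\le(1.5)^{3/2}q^3<1.84q^3$, and $(q^2+q+1)^{1/2}\le\sqrt{7/4}\,q<1.33q$. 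Together with $24(2+\sqrt2)<82$, the second term is below $82\cdot1.84\cdot1.33\,q^4<201q^4$.
\end{itemize}

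Adding gives $4+378+201=583<600$, which finishes the proof. The bounds are crude because they are tight only at $q=2$ and decrease as $q$ grows. If one of the constants came out slightly too large, I would instead check $q=2$ and $q=3$ directly and use the better ratio bounds for $q\ge4$.
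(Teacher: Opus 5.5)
Your proof is correct and follows the paper's argument. You use the same three-way split of $\PG(4,q)$ into the two classes of Lemma \ref{small} and $\cL$, then the same factor-by-factor estimates for $q\ge2$ ($n\le\frac{31}{16}q^4$, $q+1\le\frac32q$, $q^2+q+1\le\frac74q^2$); your rounding gives $583q^4$ instead of the paper's $582q^4$, which makes no difference.
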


\begin{proof}
Every $c\in\PG(4,q)$ is counted in one of the two sums in Lemma \ref{small}, or belongs to $\cL$. So Lemma \ref{small} and Proposition \ref{large} give the first inequality. For $q\ge2$,
$$n\le\frac{31}{16}q^4,\,q+1\le\frac32q,\,q^2+q+1\le\frac74q^2.$$
So
$$|\cS|\le\left(\frac{31}{8}+378+18\sqrt{21}\,(1+\sqrt2)\right)q^4<582q^4<600q^4.$$
\end{proof}

\subsection{The Ramsey bound}

\begin{proof}[Proof of Theorem \ref{explicit}]
By the definition of $V(D^*)$,
$$|V(\Gamma_q)|=nd=(q^4+q^3+q^2+q+1)(q^3+q^2+q+1).$$
The clique bound is Corollary \ref{clique} and the independence bound is Theorem \ref{independence}.\vspace{0.5em} \\
Let $t$ be sufficiently large and put $X=(t/9600)^{1/4}$. Choose the least power $q$ of $2$ with $q\ge X$. Then $q<2X$, so
$$\alpha(\Gamma_q)<600q^4<600(2X)^4=t.$$
Also
$$|V(\Gamma_q)|>q^7\ge X^7=\left(\frac{t}{9600}\right)^{7/4}.$$
Since $\Gamma_q$ is $K_5$ free,
$$r(5,t)>\left(\frac{t}{9600}\right)^{7/4}$$
for all sufficiently large $t$. Hence $r(5,t)=\Omega(t^{7/4})$.
\end{proof}

\section{The algorithmic bound}\label{deterministic}

\subsection{The construction}

Following Kostochka, Pudl\'ak and R\"odl \cite{KPR}, let $V(H(G))=E(G)$. Join disjoint edges $e,f$ in $H(G)$ when an edge of $G$ joins their endpoint sets. We get $G$ by pruning the graph below.\vspace{0.5em} \\
Let $q=3^h$ with $3\nmid h$ and $f(X)=X^{14}$. Coulter and Matthews \cite[Theorem 4.1]{CM} proved that $X\mapsto f(X+c)-f(X)$ permutes $\F_q$ for every $c\ne0$. Let $V(B)=\F_q^2$ and
$$(x,y)(x',y')\in E(B)\iff x\ne x'\text{ and }y+y'=f(x+x').$$

\begin{lemma}\label{ambient}
We have $C_4\nsubseteq B$, $|V(B)|=q^2$, $\Delta(B)=q-1$ and $\norm{B-J/q}\le\sqrt q+1$.
\end{lemma}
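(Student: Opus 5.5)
We prove the four claims in turn; the structural fact behind all of them is that, for fixed $(x,y)$ and fixed first coordinate $s=x+x'$ of the ``sum'' with $s\neq 2x$, there is exactly one neighbor, namely $(s-x,\,f(s)-y)$. This is a sum-type (Cayley-sum) construction on the first coordinate, twisted by $f$.

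\emph{Vertex count and degrees.} The count $|V(B)|=q^2$ is immediate. For the degree, fix $(x,y)$. A neighbor $(x',y')$ is determined by $x'\neq x$, since then $y'=f(x+x')-y$ is forced. This gives exactly $q-1$ neighbors. Note that $x'\ne x$ excludes loops: if $x'=x$ then $y'=f(2x)-y$ could equal $y$, but that case is excluded. Hence $B$ is $(q-1)$-regular, and in particular $\Delta(B)=q-1$.

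\emph{No $C_4$.} Suppose $(x_1,y_1)$ and $(x_2,y_2)$ are distinct with two common neighbors $(u,v)$ and $(u',v')$, where $u\ne u'$ (equal $u$ forces equal $v$). Subtracting the defining equations gives
$$y_1-y_2=f(x_1+u)-f(x_2+u)=f(x_1+u')-f(x_2+u').$$
If $x_1=x_2$ then $y_1=y_2$, a contradiction. So $c=x_1-x_2\neq0$, and setting $X=x_2+u$, $X'=x_2+u'$ we get $f(X+c)-f(X)=f(X'+c)-f(X')$ with $X\neq X'$. This contradicts the Coulter--Matthews permutation property. The conditions $x\ne x'$ do not interfere with this argument.

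\emph{Spectral bound.} This is the main obstacle. Let $M$ be the $q^2\times q^2$ matrix with $M_{(x,y),(x',y')}=1$ iff $y+y'=f(x+x')$, loops included, so that $B=M-L$ where $L$ is the diagonal-type part with $x=x'$. The matrix $L$ has at most one entry per row and column (namely $y'=f(2x)-y$), so it is a partial permutation matrix and $\|L\|\le1$. It then suffices to show $\|M-J/q\|\le\sqrt q$. For this, compute $M^2$: the entry $(M^2)_{(x_1,y_1),(x_2,y_2)}$ counts $u$ with $f(x_1+u)-f(x_2+u)=y_1-y_2$. When $x_1\ne x_2$ this count is exactly $1$ by the permutation property. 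When $x_1=x_2$ it is $q$ if $y_1=y_2$ and $0$ otherwise. Hence $M^2=J-(J_q\otimes I_q)+qI$ in suitable coordinates, where $J_q\otimes I_q$ is the indicator of $x_1=x_2,\,y_1=y_2$, i.e.\ $I$. Checking directly: for $x_1\neq x_2$ the entry is $1$; for $x_1=x_2,\,y_1\ne y_2$ it is $0$; and on the diagonal it is $q$. So $M^2=J-P+qI$, where $P$ is the indicator of $x_1=x_2$, i.e.\ $P=J_q\otimes I_q$ if we index by $(x,y)$ and take $J_q$ on the $x$-coordinate and $I_q$ on $y$. Since $M$ is symmetric with $M\one=q\one$ and $M$ commutes with $J$, we have
$$(M-J/q)^2=M^2-J=qI-P.$$
The matrix $P$ is positive semidefinite with eigenvalues $0$ and $q$, so $qI-P$ has eigenvalues in $[0,q]$. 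Therefore $\|M-J/q\|\le\sqrt q$, and by the triangle inequality $\|B-J/q\|\le\sqrt q+1$. The delicate point to double-check is the block structure of $M^2$ when $x_1=x_2$ with $y_1\ne y_2$, and that $M$ is symmetric, which holds because $x+x'$ and $y+y'$ are symmetric in the two vertices.
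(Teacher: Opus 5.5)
Your proof is correct and follows the paper's argument. Planarity of $X^{14}$ gives the common-neighbour counts $M^2=J-P+qI$, which yields both the absence of $C_4$ and $\norm{M-J/q}\le\sqrt q$, and removing the $x=x'$ involution then costs at most $1$ in norm. One notational slip: the indicator of $x_1=x_2$ is $I_q\otimes J_q$ (identity on $x$, all-ones on $y$), not $J_q\otimes I_q$, but both have spectrum $\{0,q\}$, so the eigenvalue step is unaffected.
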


\begin{proof}
Let $A$ be the adjacency matrix with $x=x'$ and loops allowed. A common neighbor $(a,b)$ of distinct vertices $(x,y),(x',y')$ satisfies
$$f(a+x)-f(a+x')=y-y',\,b=f(a+x)-y.$$
There is $1$ solution for $x\ne x'$ and $0$ for $x=x'$. So $C_4\nsubseteq B$. Also $A\one=q\one$ and for $z\perp\one$,
$$\norm{Az}_2^2=q\norm z_2^2-\sum_x\left(\sum_yz_{x,y}\right)^2\le q\norm z_2^2.$$
Hence $\norm{A-J/q}\le\sqrt q$. The involution $(x,y)\mapsto(x,f(2x)-y)$ has matrix $A-B$ so
$$B\one=(q-1)\one,\,\norm{B-J/q}\le\norm{A-J/q}+\norm{A-B}\le\sqrt q+1.$$
\end{proof}

\subsection{Petersen counts}

Let $P$ be the Petersen graph with edges $a_ia_{i+1}$, $a_ib_i$ and $b_ib_{i+2}$ with indices modulo $5$. A Petersen embedding maps its $10$ labeled vertices injectively into the ambient graph and preserves its $15$ edges. It doesn't need to be induced.

\begin{proposition}\label{pruning}
For a constant $C$ we construct in polynomial time $B_0\subseteq B$ with
$$n=|V(B_0)|=q^2-O(q),\,\Delta(B_0)\le q,\,\norm{B_0-J/q}\le\sqrt q+1+C.$$
Every edge lies in at most $Cq^2$ embeddings. Mark each $P_3$ with more than $Cq$ embeddings. Each edge belongs to at most $C$ marked $P_3$.
\end{proposition}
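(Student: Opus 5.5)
We obtain $B_0$ from $B$ by deleting a set $D$ of $O(q)$ vertices, so $B_0$ is an induced subgraph. Induced subgraphs of $B$ are automatically $C_4$ free with maximum degree at most $q-1$. For the norm, note that $B_0-J/q$ is a principal submatrix of $B-J/q$. Lemma \ref{ambient} therefore gives $\norm{B_0-J/q}\le\sqrt q+1$, which is already better than the stated $\sqrt q+1+C$. The real work is choosing $D$ so that the two Petersen conditions hold, and doing so deterministically. All Petersen embeddings can be listed in polynomial time: in a $C_4$ free graph with $\Delta\le q$, fixing the image of the $5$-cycle $a_0\cdots a_4$ (at most $q^2\cdot q^4$ choices) leaves each $b_i$ with at most $q$ options, so there are $O(q^{11})$ embeddings in all.

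\textbf{Structure of embeddings in $B$.} First I would bound embeddings through a fixed edge or fixed $P_3$ using the algebraic structure of $B$. In $B$, any two vertices $u,v$ with distinct first coordinates have exactly one common neighbour. The same holds in the loop graph $A$, and the only discrepancy comes from the involution. So once some vertices of $P$ are placed, every other vertex adjacent to two placed vertices is forced.

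Take the $6$-cycle $a_0a_1a_2a_3a_4$ through $b_0$ and its completion. For a fixed edge $uv$, place it as $a_0a_1$ and choose $a_2,a_3,b_0$ freely, which gives at most $q^3$ choices. Then $a_4$ is the unique common neighbour of $a_3$ and $a_0$. After that, $b_1,\ldots,b_4$ each need a neighbour $a_i$ together with adjacency to $b_{i\pm2}$, and $b_2,b_3$ are forced as common neighbours. The result is at most $q^3$ times a bounded number of solutions of one extra polynomial constraint in $\F_q$ per closing edge. Each closing edge cuts the count by a factor of $q$ on average, giving the generic count $Cq^2$ per edge.

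\textbf{Deletion.} The generic count can fail: a closing equation may degenerate, holding identically along some algebraic family. I would handle this by deletion rather than by proving no degeneracy occurs.
\begin{itemize}
\item Call an edge \emph{bad} if it lies in more than $Cq^2$ embeddings, and call a vertex bad if it lies in many bad edges or many marked $P_3$.
\item By double counting the total number of embeddings, which is $O(q^{10})$ from the $6$-cycle-plus-forcing argument, the bad vertices number at most $O(q)$ provided $C$ is large.
\item Delete them greedily. Since we only delete, counts only decrease.
\end{itemize}
The condition that each edge lies in at most $C$ marked $P_3$ is obtained the same way. The total number of embeddings is $O(q^{10})$ and each $P_3$ lies in $O(1)$ embedding positions, so $\sum_{P_3}(\#\text{emb})=O(q^{10})$. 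Hence there are at most $O(q^9)$ marked $P_3$s, that is, $O(q^6)$ per $q^3$ edges of order $q$. A Markov-type bound then shows that the edges in more than $C$ marked $P_3$s are spanned by $O(q)$ vertices. This requires care: an edge $uv$ lies in at most $2q$ paths $P_3$.

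\textbf{Main obstacle.} The hard step is the uniform algebraic count: showing the total number of embeddings is $O(q^{10})$ and that the exceptional edges are concentrated on $O(q)$ vertices rather than spread out. Spread-out exceptions would force $\Omega(q^2)$ deletions. I would first try to exploit the symmetry $(x,y)\mapsto(x+s,y+t')$, which needs checking against $f$: translating $x$ by $s$ does not preserve $f(x+x')$ unless it is compensated. If the symmetry fails, I would instead reduce each closing equation to an equation of the form $f(X+c)-f(X)=\text{const}$ and invoke the Coulter--Matthews permutation property. That property gives exactly one solution per constraint, except when $c=0$, and $c=0$ happens only on a set of vertices lying on $O(1)$ lines $x=\text{const}$. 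Deleting those lines costs $O(q)$ vertices, which is what makes $n=q^2-O(q)$.
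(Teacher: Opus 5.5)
Your proposal has a genuine gap, and it sits exactly at the step you call the main obstacle. The Markov-type counting you propose cannot close it.

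\textbf{The counting does not localize the exceptions.} Your total of $O(q^{10})$ embeddings is far too weak. With it the average per edge is of order $q^7$, so no threshold conclusion follows. Similarly, $O(q^9)$ marked $P_3$ exceeds the roughly $q^4/2$ paths $P_3$ in $B$.

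The true total is $O(q^5)$: ten first coordinates subject to five conditions. Even this correct total does not help, because the thresholds $Cq^2$ and $Cq$ are only a constant factor above the averages. Markov then shows only that an $O(1/C)$ fraction of edges and of $P_3$ are exceptional. That still allows order $q/C$ bad edges at every vertex, and order $q/C$ marked $P_3$ through a typical edge. Neither $O(q)$ vertex deletions nor a norm perturbation of size $O(1)$ can absorb this.

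What is needed is structure, not averages. Lemma~\ref{rooted} rests on Lemma~\ref{elimination} and a resultant computation showing that at most $5$ of the ten first coordinates are algebraically independent. It gives nonzero polynomials $R_e(x_0,y_0,x_1)$ and $R_p(x_0,y_0,x_1,x_2)$, off whose zero sets the counts are $O(q^2)$ and $O(q)$.

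The paper takes $R(x_0,y_0,x_1)$ to be the product of $cR_e$ over the two orientations of the edge, where $c$ is a nonzero coefficient of $R_p$ in $x_2$. Then:
\begin{itemize}
\item every marked edge has $R=0$;
\item an edge with $R\ne0$ lies in at most $2\deg_{x_2}R_p$ marked $P_3$;
\item at a vertex with $R(x_0,y_0,X)\not\equiv0$, at most $\deg_{x_1}R\le C$ marked edges meet it;
\item the other vertices are zeros of a nonzero coefficient of $R$, so there are only $O(q)$ of them.
\end{itemize}

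\textbf{Vertex deletion alone cannot suffice.} After the $O(q)$ exceptional vertices are removed, up to $C$ marked edges can remain at every vertex, spread over the whole graph. In your scheme such an edge survives, since neither endpoint is ``bad''. It then violates the $Cq^2$ bound or the bound of $C$ marked $P_3$.

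The paper deletes these edges as well. They form a graph of maximum degree at most $C$, whose adjacency matrix has norm at most $C$. This is exactly where the $+C$ in $\norm{B_0-J/q}\le\sqrt q+1+C$ comes from. Your remark that an induced subgraph already gives $\sqrt q+1$ is a symptom of this missing step.

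\textbf{Your fallbacks for the algebraic count do not work as stated.} In your forcing parametrization, the forced vertices are defined only implicitly through the degree-$13$ equation $f(a+c)-f(a)=\text{const}$. So the last closing adjacency (for instance $b_1b_4$ in your order) is not a bounded-degree polynomial in your free choices, and ``a factor of $q$ on average'' is unjustified.

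The Coulter--Matthews property is what makes common neighbours unique. It says nothing about how often the closing condition holds. The inner-pentagram conditions $F_{13},F_{14},F_{35},F_{24},F_{25}$ are coupled equations in several unknowns, not single equations of that form.

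The paper avoids both problems by taking all ten first coordinates as unknowns. The $y_i$ are then determined linearly and every condition is a polynomial of degree at most $14$. It then bounds the dimension of that system.
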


\begin{proof}
An oriented edge has endpoints $(x_0,y_0)$ and $(x_1,f(x_0+x_1)-y_0)$. A $P_3$ centered at $(x_0,y_0)$ adds the other coordinate $x_2$. Lemma \ref{rooted} gives nonzero polynomials $R_e(x_0,y_0,x_1)$ and $R_p(x_0,y_0,x_1,x_2)$ with extension bounds $O(q^2)$ at $R_e\ne0$ and $O(q)$ at $R_p\ne0$.\vspace{0.5em} \\
Take a nonzero coefficient $c$ of $R_p$ in $x_2$ and let
$$R(x_0,y_0,x_1)=(cR_e)(x_0,y_0,x_1)(cR_e)(x_1,f(x_0+x_1)-y_0,x_0).$$
Edge reversal is invertible so $R\ne0$. At $R\ne0$ the edge has $O(q^2)$ embeddings and at most $2\deg_{x_2}R_p$ partners with $R_p=0$. Choose $C$ larger than these constants and $\deg_{x_1}R$.\vspace{0.5em} \\
In $B$ mark each $P_3$ with more than $Cq$ embeddings. Mark an edge with more than $Cq^2$ embeddings or more than $C$ marked $P_3$ through it. Every marked edge satisfies $R=0$. If $R(x_0,y_0,X)\not\equiv0$ then at most $\deg_{x_1}R\le C$ marked edges meet $(x_0,y_0)$. The vertices with $R(x_0,y_0,X)\equiv0$ lie on the zeros of a nonzero coefficient of $R$, so there are $O(q)$ of them.\vspace{0.5em} \\
Delete vertices meeting more than $C$ marked edges, then delete the remaining marked edges. The first step removes $O(q)$ vertices. The second removes a graph of maximum degree at most $C$. Its adjacency matrix has norm at most $C$ so $\norm{B_0-J/q}\le\sqrt q+1+C$. Counts only decrease. We test $O(q^{20})$ vertex assignments.
\end{proof}

Let $V(\Gamma)=E(B_0)$. Join edges forming a marked $P_3$ or lying in a common $C_3$. Since an edge in a $C_4$ free graph lies in at most $1$ triangle, $\Delta(\Gamma)\le C+2$. Let $\mathcal P$ be the Petersen embeddings whose edges are independent in $\Gamma$.

\begin{lemma}\label{extensions}
For $F\subseteq E(B_0)$ with $|F|=j$,
$$|\{P\in\mathcal P:F\subseteq E(P)\}|=\begin{cases} O(q^2)&j=1,\\ O(q)&2\le j\le5,\\ O(1)&6\le j\le15. \end{cases}$$
A fixed vertex lies in $O(q^3)$ embeddings in $\mathcal P$.
\end{lemma}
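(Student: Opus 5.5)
The plan is to bound, for each $j$ and each set $F$ of $j$ edges of $B_0$, the number of Petersen embeddings containing $F$. We then use independence in $\Gamma$ to rule out the cases that would give too many embeddings.

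\textbf{Case $j=1$.} The bound is immediate from Proposition \ref{pruning}: every edge of $B_0$ lies in at most $Cq^2$ embeddings, and $\mathcal P$ is a subfamily of all embeddings.

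\textbf{Case $2\le j\le5$.} Since $F\subseteq E(P)$ for $P\in\mathcal P$, the edges of $F$ are pairwise non-adjacent in $\Gamma$. In particular, no two edges of $F$ form a marked $P_3$. Two cases arise.
\begin{itemize}
\item If two edges of $F$ share a vertex, they form a $P_3$ that is unmarked. By definition it then lies in at most $Cq$ embeddings, which gives $O(q)$.
\item If the edges of $F$ are pairwise disjoint, we use the graph structure of $P$ together with the $C_4$-freeness of $B$ (Lemma \ref{ambient}). Take two disjoint edges $e,e'$ of $F$. In the Petersen graph, every two disjoint edges either are joined by an edge or lie at distance $2$ through a common neighbor (diameter $2$, girth $5$). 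We fix the labels of $e,e'$ in $P$; there are $O(1)$ choices. Fixing the images of the four endpoints, a connecting vertex or edge is then determined up to $O(q)$ choices, since $\Delta(B_0)\le q$. In the distance-$2$ case, two fixed vertices have at most one common neighbor because $B$ is $C_4$ free.
\end{itemize}
Either way we obtain a connected subgraph of $P$ whose image contains $e$ and an extra edge adjacent to it. This forms an unmarked $P_3$, or a triangle edge pair, which is excluded. So we reduce to counting extensions of an unmarked $P_3$ while spending at most one factor $q$. A cleaner route is to count directly: the number of embeddings containing both $e$ and $e'$ is at most the number of ways to choose a $P_3$ through $e$ inside $P$ times the $O(q)$ bound, with consistency forced. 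I expect this bookkeeping to be the main obstacle: ensuring the connecting path chosen is itself an unmarked $P_3$, and that the extra freedom is $O(1)$ rather than $O(q)$.

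\textbf{Case $6\le j\le15$.} Here the point is rigidity. Six edges of the Petersen graph span at least $7$ vertices (a matching of $5$ edges covers all $10$, and $6$ edges cannot lie on $6$ vertices since girth is $5$). The remaining vertices are then determined up to $O(1)$ by $C_4$-freeness: a vertex of $P$ with two already-placed neighbors has at most one image. We check that from any $6$-edge subset, repeatedly adding vertices with $\ge2$ placed neighbors, or forced otherwise, completes the embedding. Where this fails, we fall back on the unmarked $P_3$ bound combined with the fact that the extra vertices have two placed neighbors. This is a finite check over edge subsets of $P$ up to symmetry.

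\textbf{Vertex count.} Sum over the at most $q$ edges at $v$: each lies in $O(q^2)$ embeddings, giving $O(q^3)$.
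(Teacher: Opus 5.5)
\textbf{The gap is in the range $6\le j\le15$.} You claim that from any six placed edges the rule \emph{a vertex with two placed neighbours has at most one image} propagates to all ten vertices. Three things go wrong:
\begin{enumerate}
\item You defer this claim to a finite check that you never carry out.
\item The supporting count is false. Six edges of $P$ can span only six vertices, for example the outer $C_5$ plus one spoke, or a $C_6$.
\item Your fallback cannot give the claimed bound. The unmarked-$P_3$ estimate is $Cq$, which is $O(q)$, not $O(1)$.
\end{enumerate}

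The missing idea is a classification of the sets where the rule stalls. Suppose $S\subsetneq V(P)$ with $|S|\ge3$ and no outside vertex has two neighbours in $S$.
\begin{itemize}
\item Every vertex of $P-S$ has at least two neighbours outside $S$. So $P-S$ contains a cycle, which has length at least $5$. Hence $|S|\le5$.
\item Any two nonadjacent vertices of $S$ have their unique common neighbour inside $S$. So $P[S]$ has diameter at most $2$ and girth at least $5$.
\item Together these force $P[S]\in\{P_3,K_{1,3},C_5\}$. Each of these has at most $5$ edges.
\end{itemize}
Therefore six labelled edges always close up to all of $V(P)$. Each of the at most $30^{j}$ label assignments then yields at most one embedding, which gives $O(1)$.

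\textbf{Your treatment of $2\le j\le5$ works, but you hedge in the wrong direction.} No factor of $q$ is spent in the disjoint case.
\begin{itemize}
\item If an edge of $P$ joins $e$ and $e'$, its image joins two vertices that are already placed.
\item Otherwise, take the common neighbour in $P$ of an endpoint of $e$ and an endpoint of $e'$. By $C_4$-freeness it has at most one image.
\end{itemize}
Either way, each label assignment determines a specific $P_3$ in $B_0$. If that $P_3$ is marked it lies in no member of $\mathcal P$; otherwise it lies in at most $Cq$ embeddings. Delete the phrases \emph{up to $O(q)$ choices} and \emph{spending at most one factor $q$}, since these would only give $O(q^2)$.

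With that fix, your route is genuinely simpler than the paper's for this range, because it needs no closure classification. The paper runs the closure rule, and when it stalls at $K_{1,3}$ or $C_5$ it pays one factor $\Delta(B_0)\le q$ for an adjacent vertex, after which everything is determined. However, the paper still needs that classification, or an actual exhaustive check, for $j\ge6$. That case is exactly where your proposal is incomplete.
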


\begin{proof}
Two nonadjacent vertices of $P$ have a unique common neighbor. Since $C_4\nsubseteq B_0$, their images determine at most $1$ image for that neighbor. Repeat this rule until no further vertex is determined.\vspace{0.5em} \\
Let $S\subsetneq V(P)$ be unchanged by the rule with $|S|\ge3$. Each vertex outside $S$ has at most $1$ neighbor in $S$. Since $P$ has degree $3$ and girth $5$,
$$\delta(P-S)\ge2\implies|S|\le5,\,\operatorname{diam}P[S]\le2\implies P[S]\in\{P_3,K_{1,3},C_5\}.$$
For a fixed assignment of $F$ to Petersen edges let $S$ be all vertices determined by the rule. Then
$$F\cong2K_2\implies P[S]\in\{C_5,P\}.$$
$$3\le|F|\le5\implies P[S]\in\{K_{1,3},C_5,P\}.$$
$$|F|\ge6\implies S=V(P).$$
For a fixed $K_{1,3}$ or $C_5$ choose an adjacent outside vertex in at most $\Delta(B_0)\le q$ ways. Since $K_{1,3}\nsubseteq C_5$ and $|V(C_5)|+1=6>5$, the rule now determines all $10$ vertices. A marked $P_3$ has $0$ extensions in $\mathcal P$ and an unmarked $P_3$ has at most $Cq$. There are at most $30^{|F|}$ edge label assignments. Proposition \ref{pruning} covers $|F|=1$, and
$$|\{P\in\mathcal P:v\in V(P)\}|\le\sum_{e\ni v}|\{P\in\mathcal P:e\in E(P)\}|\le\Delta(B_0)Cq^2\le Cq^3.$$
\end{proof}

\subsection{Selecting edges}

Let $\xi_e$ be independent $0,1$ variables with $\Prob(\xi_e=1)=p$. Retain $e$ when $\xi_e=1$ and every neighbor in $\Gamma$ has value $0$. Its indicator is $T_e=\xi_e\prod_{f\in N_\Gamma(e)}(1-\xi_f)$. The graph $T$ contains no $C_3$, $C_4$ or marked $P_3$, and
$$\E T_e\ge p\bigl(1-(C+2)p\bigr),\,\Prob(F\subseteq E(T))\le p^{|F|}.$$
For $P\in\mathcal P$ let $I_P=\prod_{e\in E(P)}T_e$. Let $M_{uv}=\sum_{P\in\mathcal P:\,uv\in E(P)}I_P$. Put $X=T-\E T$ and $Y=M-\E M$.

\begin{lemma}\label{moments}
For fixed $k\ge1$ with $p\le q^{-1/7}$, $pq\ge1$ and $q^3p^{15}\ge1$,
$$\E\tr X^{2k}=O\bigl(n(pq)^k\bigr),\,\E\tr Y^{2k}=O\bigl(n(q^3p^{15})^k\bigr).$$
\end{lemma}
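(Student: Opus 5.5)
We expand the trace as a sum over closed walks and bound the expected contribution of each walk shape. Write
$$\E\tr X^{2k}=\sum_{w}\E\prod_{i=1}^{2k}X_{w_{i-1}w_i},$$
where $w=(w_0,\dots,w_{2k}=w_0)$ ranges over closed walks in $B_0$ (only edges of $B_0$ carry nonzero entries). Group the walk by the set of distinct edges $E(w)$ it traverses. The variables $T_e$ are not independent, but $T_e$ depends only on $\xi_f$ for $f\in\{e\}\cup N_\Gamma(e)$, a set of size at most $C+3$. Two variables $X_e,X_{e'}$ are therefore independent unless $e,e'$ are at $\Gamma$-distance at most $2$. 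Consider the dependency graph on $E(w)$ in which two edges are joined when they are within $\Gamma$-distance $2$. Any component of this graph consisting of a single edge traversed exactly once contributes a factor $\E X_e=0$.

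So every edge of the walk that is traversed once must be $\Gamma$-close to another edge of the walk. Since $\Delta(\Gamma)\le C+2$, each such edge has $O(1)$ possible partners. For the moment bound, $|\E\prod X|\le 2^{2k}\E\prod_{e\in E(w)}T_e^{m_e}+\dots$; more simply, after expanding each $X_e=T_e-\E T_e$ we bound everything by $p^{|\text{distinct edges}|}$ up to constants, using $\Prob(F\subseteq E(T))\le p^{|F|}$.

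The counting then runs as follows.
\begin{itemize}
\item A walk visiting $v+1$ distinct vertices with all edges repeated has at most $n\,\Delta(B_0)^{v}\le nq^{v}$ choices and contributes $p^{|E(w)|}\le p^{v}$. Since $v\le k$, this gives $n(pq)^{v}\le n(pq)^k$, using $pq\ge1$.
\item For walks in which some edges are used once but have a close partner, the partner's choice costs only $O(1)$ instead of $q$. We charge pairs: each new vertex reached by a singly-used edge is either forced up to $O(1)$ choices or pays a factor $p$ without compensating $q$.
\end{itemize}
The key inequality to establish is that the number of ``free'' vertex choices (factor $q$) never exceeds the number of distinct edges (factor $p$) beyond the $n$ for the start, and never exceeds $k$. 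This is the standard Füredi–Komlós style argument, and it is where $p\le q^{-1/7}$ is not even needed. I would try to show that a closed walk of length $2k$ whose edges are all either repeated or paired through $\Gamma$-closeness has at most $k$ free steps.

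For $Y$, the same trace expansion is applied to $\E\tr Y^{2k}$, but now each entry $M_{uv}$ is itself a sum over Petersen copies $P\ni uv$ in $\mathcal P$. Expanding, a term is a closed walk $w$ together with a choice of Petersen copy $P_i\ni w_{i-1}w_i$ for each step, weighted by $\E\prod(I_{P_i}-\E I_{P_i})$. Terms vanish unless each $P_i$ shares an edge with, or is $\Gamma$-close to, some other $P_j$ (dependency again has bounded range via $\Delta(\Gamma)$). The expectation is at most $p^{|\bigcup E(P_i)|}$. The single-step weight is heuristically $q^3p^{15}$: there are $O(q^3)$ copies through a vertex by Lemma \ref{extensions}, each costing $p^{15}$. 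We need the overlapping configurations to cost no more. Here Lemma \ref{extensions} is the tool: when a new copy $P_j$ shares $j'$ edges with the union of earlier copies, it has $O(q^2)$, $O(q)$ or $O(1)$ extensions for $j'=1$, $2\le j'\le5$, or $j'\ge6$, while it adds $p^{15-j'}$. We then check that $q^2p^{14}$, $qp^{10}$ and $p^{0}$ are each at most $O(1)$ times what the walk budget allows. Specifically, we need $q^2p^{14}\le (q^3p^{15})^{1/2}$ and similar inequalities, which reduce to conditions like $qp^{13}\le1$, or to $p\le q^{-1/7}$ in the worst case ($q^2p^{14}\le1$).

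\textbf{Main obstacle.} The hardest step is this overlap bookkeeping for $Y$: organising the $2k$ copies into a growth order and showing each overlap type is dominated, given $p\le q^{-1/7}$ and $q^3p^{15}\ge1$. I would first try to prove a per-step inequality of the form ``the contribution of $P_j$ given the earlier copies is $O(\max(1,(q^3p^{15})^{1/2}))$ per walk step'' and then multiply.
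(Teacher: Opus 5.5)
Your setup matches the paper's: the trace expansion over walks and copies, the centered bound $2^{2k}p^{|\bigcup_iE(P_i)|}$, dependence only within $\Gamma$-distance $2$, vanishing of isolated factors, and the overlap counts of Lemma~\ref{extensions}. But there is a genuine gap. The step that actually proves the lemma is left as ``I would try to show'', and the route you suggest for it does not work as stated.

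\textbf{The per-step inequality is false.} The first copy of each dependency class is forced only through one fixed vertex. It therefore contributes $\Theta(q^3p^{15})$, not $O((q^3p^{15})^{1/2})$, and $q^3p^{15}$ can be as large as $q^{6/7}$ under the hypotheses. Only a bound amortized over a whole dependency class can hold.

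\textbf{The order of revealing then matters.} Suppose you reveal in walk order, and consider a class $\{1,3,5\}$ in which $P_1$ and $P_3$ each share variables only with $P_5$. Then $P_1$ and $P_3$ are both independent of everything revealed before them. Each is charged $q^3p^{15}$ (or $pq$ for $X$), and such terms push the exponent above $k$. For the same reason, your ``at most $k$ free steps'' fails if ``free'' means ``not close to an earlier step''.

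\textbf{A case is missing from your table.} A copy may share a variable but no edge with the earlier copies. Then the shared $\xi$ pins one of its edges to within $\Gamma$-distance $2$ of an earlier edge, which gives $O(1)$ choices. Lemma~\ref{extensions} then gives $O(q^2)$ copies with $15$ new edges, which is $O(1)$ after the factor $p^{15}$.

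\textbf{The missing idea is to count by components, not by steps.}
\begin{enumerate}
\item Join $i,j$ when $I_{P_i},I_{P_j}$ (for $X$, $T_{e_i},T_{e_j}$) share a variable. In a nonzero term every component has size at least $2$, so there are $c\le k$ components, and distinct components share no edge.
\item Reveal one component at a time. Within a component, always reveal next a copy adjacent to an already revealed copy of that component; connectivity makes this possible. By the table together with $q^2p^{14}\le1$ and $qp^{10}\le1$, each such copy contributes $O(1)$.
\item To open a new component, choose $i$ with $P_i$ revealed and $P_{i+1}$ not; such $i$ exists by cyclicity of the walk. Then $P_{i+1}$ passes through the already fixed vertex $v_i$, giving $O(q^3)$ choices and $15$ new edges.
\end{enumerate}
Hence $\E\tr Y^{2k}\le O(n)\sum_{c\le k}(q^3p^{15})^c=O(n(q^3p^{15})^k)$, using $q^3p^{15}\ge1$.

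The same bookkeeping with single edges gives the bound for $X$, using $pq\ge1$. Opening a component costs at most $q$ choices and a factor $p$, and every later edge has $O(1)$ choices. Your ``at most $k$ free steps'' is then exactly $c\le k$, once free steps are defined as component starts.
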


\begin{proof}
If $T_e,T_f$ use a common $\xi$, then $e,f$ have distance at most $2$ in $\Gamma$. For fixed $e$ there are at most $1+(C+2)^2$ choices for $f$.\vspace{0.5em} \\
Expand $\tr Y^{2k}$ over walks $v_0v_1\cdots v_{2k}$ with $v_{2k}=v_0$ and embeddings $P_i$ containing $v_{i-1}v_i$. The centered expansion gives
$$\left|\E\prod_{i=1}^{2k}(I_{P_i}-\E I_{P_i})\right|\le2^{2k}p^{|\bigcup_iE(P_i)|}.$$
For each $S\subseteq\{1,\ldots,2k\}$ its term is bounded by $p^{|\bigcup_{i\in S}E(P_i)|+15(2k-|S|)}\le p^{|\bigcup_iE(P_i)|}$.
Join $i,j$ when $I_{P_i},I_{P_j}$ use a common variable. A singleton component contributes $\E(I_{P_i}-\E I_{P_i})=0$. So the number $c$ of components in a nonzero term satisfies $2c\le2k$.\vspace{0.5em} \\
Fix the partition of $\{1,\ldots,2k\}$ into these components and start with the component containing $P_1$. After completing a component, if any remain, choose $i$ with $P_i$ completed and $P_{i+1}$ not yet chosen, with indices modulo $2k$. Such an $i$ exists along the cyclic walk. Start the next component at $P_{i+1}$. Its edge $v_iv_{i+1}$ has $v_i$ already fixed. Within each component choose each later occurrence to share a variable with an earlier one. This is possible because the component is connected. Only $v_0$ has $n$ choices.\vspace{0.5em} \\
Each embedding has $30$ oriented walk edges. Choosing one fixes its two walk vertices. Different components share no edge. For a later occurrence let $j$ count its edges among those already chosen. Lemma \ref{extensions} gives
$$\begin{array}{c|c|c} \text{occurrence}&\text{choices}&\text{new edges}\\ \hline \text{first in a component}&O(q^3)&15\\ j=0&O(q^2)&15\\ j=1&O(q^2)&14\\ 2\le j\le5&O(q)&15-j\\ 6\le j\le15&O(1)&15-j \end{array}$$
For $j=0$ the shared variable fixes an edge within distance $2$ of an earlier edge. For $j\ge1$ choose the shared edges from at most $30k$ exposed edges. Since $q^2p^{14}\le1$ and $qp^{10}\le1$, every later occurrence in a component contributes $O(1)$ after multiplying by $p$ for each new edge. For fixed $k$ there are $O(1)$ partitions, orders and orientations. So
$$\E\tr Y^{2k}\le O(n)\sum_{c=1}^k(q^3p^{15})^c=O\bigl(n(q^3p^{15})^k\bigr).$$
For $X$ use single edges. A component starts with at most $q$ choices and a factor $p$. Every later edge has $O(1)$ choices. So $\E\tr X^{2k}\le O(n)\sum_{c=1}^k(pq)^c=O(n(pq)^k)$.
\end{proof}

\subsection{The algorithm}\label{algorithm}

Choose a fixed integer $K$ with $C3^{-14K}\le1/4$ and let $p=3^{-K-\lceil h/7\rceil}=\Theta(q^{-1/7})$. Start at $h_0$ large enough that $(C+2)p\le1/4$, $n\ge q^2/2$, $pq\ge1$ and $q^3p^{15}\ge1$.\vspace{0.5em} \\
Let $G$ be $T$ with every edge in a Petersen copy deleted. Then $C_3,C_4,P\nsubseteq G$. Entrywise $G\ge T-M$ and
$$\E M\le Cq^2p^{15}B_0,\,\E T-\E M\ge\bigl[p(1-(C+2)p)-Cq^2p^{15}\bigr]B_0\ge\frac p2B_0.$$
So for $z\ge0$
$$z^{\T}Gz\ge\frac p2z^{\T}B_0z-(\norm X+\norm Y)\norm z_2^2.$$
Fix $k\ge3$. Let $R_q$ be the least nonnegative integer with $R_q^{2k}\ge\E\tr X^{2k}+\E\tr Y^{2k}$. Fix each $\xi_e$ to minimize the conditional expectation of $\tr X^{2k}+\tr Y^{2k}$. The means $\E T,\E M$ stay fixed. The final sum doesn't exceed its initial expectation so
$$\norm X+\norm Y\le2\bigl(\tr X^{2k}+\tr Y^{2k}\bigr)^{1/(2k)}\le2R_q=O(q^{3/7+1/k}).$$
For an independent set $S\ne\varnothing$, take $z=\one_S$. Then
$$0\ge\frac p2\left(\frac{|S|^2}{q}-(\sqrt q+1+C)|S|\right)-2R_q|S|,$$
so $\alpha(G)\le q(\sqrt q+1+C+4R_q/p)$. Let
$$t_q=\left\lceil q\left(\lceil\sqrt q\rceil+1+C+\frac{4R_q}{p}\right)\right\rceil.$$
Given $t\ge2$ test $q=3^h$ in increasing order with $h\ge h_0$ and $3\nmid h$. Stop at the first $t_q\ge t$ and use the preceding field. If none exists return one vertex. Otherwise fix the variables as above and return $H(G)$.

\begin{lemma}\label{runtime}
For fixed $k$ the algorithm is deterministic and polynomial in $t$.
\end{lemma}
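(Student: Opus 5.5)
The plan is to bound every stage of the algorithm of Section \ref{algorithm} by a polynomial in $t$, and the first task is to show the search over fields stops at $q=t^{O(1)}$. Since $p=\Theta(q^{-1/7})$ and $R_q=O(q^{3/7+1/k})$, we have $t_q=O(q^{3/2}+q^{11/7+1/k})$. In the other direction $t_q\ge q\lceil\sqrt q\rceil\ge q^{3/2}$. Consecutive admissible $h$ with $3\nmid h$ differ by at most $2$. So the first $q$ with $t_q\ge t$ satisfies $q^{3/2}\le 9^{3/2}\cdot t$ unless $h_0$ already exceeds it, which gives $q=O(t^{2/3})$. Hence at most $O(\log t)$ fields are tested and every field used has size polynomial in $t$.

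Next I make each $t_q$ computable in polynomial time. This needs the integer $R_q$, so $\E\tr X^{2k}+\E\tr Y^{2k}$ must be evaluated exactly as a rational number.
\begin{enumerate}
\item Build $B$ over $\F_q$ using field arithmetic in $\F_{3^h}$, which costs $\mathrm{poly}(h)$ per operation.
\item Run Proposition \ref{pruning}, which enumerates $O(q^{20})$ vertex assignments of Petersen embeddings.
\item Form $\Gamma$ and $\mathcal P$.
\end{enumerate}
Then expand $\E\tr X^{2k}$ over closed walks of length $2k$ in $B_0$, of which there are at most $n q^{2k}$. For $Y$, expand over walks together with embeddings $P_i\ni v_{i-1}v_i$, giving at most $n q^{2k}(Cq^2)^{2k}$ tuples for fixed $k$. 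Each term is a product of $O(k)$ centered factors. Each factor is a polynomial in the $\xi$'s supported on $O(k\cdot 15(C+3))$ variables, so its expectation is a polynomial in $p$ computed in $O(1)$ time. Since $p=3^{-m}$, all quantities are rationals of polynomial bit length, and $R_q$ follows by integer root extraction with binary search.

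The remaining step, fixing the $\xi_e$ one at a time by conditional expectations, is the one I expect to be the main obstacle. At each step we must compute $\E[\tr X^{2k}+\tr Y^{2k}\mid \xi_{e_1},\dots,\xi_{e_i}]$ for both values of the next variable. I would reuse the same walk/embedding expansion. Each term is still a product of boundedly many polynomials in boundedly many variables. Conditioning substitutes $0$ or $1$ for the fixed variables, and the remaining ones are integrated with $\Prob(\xi=1)=p$. So the conditional expectation is a sum of polynomially many terms, each evaluated in $O(1)$ time.

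The point to verify is that the means $\E T$ and $\E M$ in the centering are constants that are not recomputed, so $X$ and $Y$ are fixed polynomials in $\xi$ throughout. Then the number of variables, $|E(B_0)|=O(q^3)$, times the cost per evaluation, $O(q^{1+6k})$ up to $\mathrm{poly}(h)$ factors, is polynomial in $q$ and hence in $t$. Finally, building $G$ means deleting the edges of $T$ that lie in Petersen copies, which is an enumeration of $O(q^{20})$ assignments. Forming $H(G)$ takes $O(|E(G)|^2)$ time. Every random choice has been replaced by a deterministic minimization, so the whole algorithm is deterministic and polynomial in $t$.
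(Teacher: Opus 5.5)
Your proposal is correct and follows the paper's route. The search stops at $q=O(t^{2/3})$ after $O(\log t)$ fields, because $t_q\ge q^{3/2}$ and consecutive field orders differ by a factor of at most $9$. $R_q$ and every conditional expectation are computed exactly by expanding the traces over walks, embeddings and centered factors, with $O(1)$ variables per term, denominators that are powers of $3$, and the centering means held fixed.

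You take two things for granted that the paper states explicitly. First, $\F_{3^h}$ must be constructed deterministically, for example by enumerating the $q$ monic polynomials of degree $h$ and testing irreducibility, which is polynomial in $q$. Second, the constants of Subsection~\ref{polynomials}, and hence $C$, $K$ and $h_0$, are computed once before the search.
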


\begin{proof}
For disjoint sets $S_1,S_0$ of unfixed variables,
$$\E\left[\prod_{e\in S_1}\xi_e\prod_{e\in S_0}(1-\xi_e)\right]=p^{|S_1|}(1-p)^{|S_0|}.$$
If a variable must be both $0$ and $1$ the term is $0$. Each trace term uses at most $30k(C+3)$ variables. Expanding over walks, embeddings and centered factors gives at most
$$n^{2k}(n^{10})^{2k}2^{2k}=q^{O(k)}$$
terms. Their common denominator divides $3^{O(kh)}$ and their numerators have $O(kh)$ binary digits. So all conditional expectations are evaluated in $q^{O(k)}$ time. We find $R_q$ by binary search using its inequality.\vspace{0.5em} \\
Since $t_q\ge q^{3/2}$ and consecutive allowed field orders differ by at most $9$, the search uses $O(\log t)$ fields and stops at $q=O(t^{2/3})$. To construct $\F_{3^h}$ we enumerate monic polynomials and test irreducibility in polynomial time in $q$. We compute the constants in Subsection \ref{polynomials} once before starting the search. All graph operations are polynomial in $q$.
\end{proof}

\subsection{The Ramsey bound}

\begin{lemma}\label{superline}
If $C_3,C_4,P\nsubseteq G$ then $\omega(H(G))\le4$ and $\alpha(H(G))\le\alpha(G)$.
\end{lemma}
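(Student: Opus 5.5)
The independence bound comes almost for free, so the plan treats it first. An independent set in $H(G)$ is a set $M$ of edges of $G$ with no edge of $G$ joining the endpoint sets of two disjoint members; non-disjoint edges are never adjacent in $H(G)$. Pick one endpoint $v_e$ of each $e\in M$, chosen so that the $v_e$ are distinct. Then the $v_e$ form an independent set in $G$, since an edge $v_ev_f$ would join the endpoint sets of $e$ and $f$. This needs some care: if $e,f\in M$ share a vertex they are not adjacent in $H(G)$, so distinctness of the chosen endpoints is not automatic. We therefore refine the argument: the graph formed by $M$ on $V(G)$ is, within each component, a forest or a unicyclic graph. Both are impossible beyond the simplest shapes, because an edge $g$ disjoint from $e$ is in $M$ only if no $G$-edge joins them; a path $e,g,f$ of three consecutive $G$-edges makes $e$ and $f$ adjacent through $g$ unless they share a vertex. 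Consequently the components of $M$ are stars or triangles, and triangles are excluded by $C_3\nsubseteq G$. Taking the centers of the stars (either endpoint for a single edge), we check that two centers are non-adjacent in $G$. If they were adjacent, the two stars' edges would be joined by that $G$-edge, unless that edge itself belongs to one of the stars; in that case the components would merge. This gives an injection from the components into an independent set of $G$.

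This only bounds the number of components, so we must strengthen it to $|M|\le\alpha(G)$. We do this by choosing, for a star with center $u$ and leaves $w_1,\dots,w_s$, the leaves instead of the center. The leaves are pairwise non-adjacent because $G$ has no triangle. A leaf $w_i$ of one star and a vertex $x$ of another are non-adjacent, since otherwise the edge $w_ix$ would join two disjoint members of $M$. So the union of all leaves, using one leaf for single edges, is independent of size $|M|$. This gives $\alpha(H(G))\le\alpha(G)$.

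For the clique bound, take five pairwise disjoint edges $e_1,\dots,e_5$ with every pair joined by a $G$-edge. (Clique members must be disjoint, since only disjoint edges are ever adjacent.) We obtain the auxiliary multigraph on the 5 edges in which each pair has a witness edge between their endpoint sets. Contracting each $e_i$ gives $K_5$ as a minor-like structure on 10 vertices with $5+10=15$ edges, the same counts as the Petersen graph. The plan is a case analysis. Orient each $e_i=x_iy_i$ and record, for each pair $ij$, which endpoints the witness uses. This yields a 2-colouring problem: vertex $x_i$ or $y_i$ receives a witness for each of the 4 other indices. Using $C_3$- and $C_4$-freeness, we forbid configurations where two witnesses from $e_i$ to $e_j,e_k$ together with the witness $e_je_k$ create a short cycle. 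Specifically, if $x_i$ is joined to $x_j$ and to $x_k$, and $x_jx_k$ is an edge, we get a triangle. If instead $x_i$ sees $x_j$ and $x_k$ while $y_j$ sees $y_k$, we get the 5-cycle $x_i x_j y_j y_k x_k$, which is allowed. Cycles of length 4, such as $x_i x_j y_j$ back through $y_i$, are excluded. The hope is that these exclusions force each endpoint to have exactly two witnesses, with the pattern exactly that of the Petersen graph's outer and inner cycles joined by a perfect matching, i.e.\ $P\subseteq G$, a contradiction.

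The main obstacle is this final case analysis: showing that every 15-edge configuration avoiding $C_3$ and $C_4$ is a copy of $P$, not necessarily induced. We will attack it by noting that the ten vertices span at least 15 edges and have girth at least 5. A graph on 10 vertices with girth $\ge5$ has at most 15 edges, with equality only for the Petersen graph, which is the extremal result for $\mathrm{ex}(10,\{C_3,C_4\})$. This avoids the manual case split if we may cite that extremal fact; otherwise we would derive it from the Moore-bound counting for girth 5 and degree considerations.
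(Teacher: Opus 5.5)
\textbf{Verdict: correct, and for the clique bound it takes a genuinely different route from the paper.}

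Your independence argument is the paper's. An independent edge set spans no path with three edges and no triangle, so its components are stars with no $G$-edge between different components, and taking leaves gives an independent set of the same size. The ``forest or unicyclic'' claim and the detour through star centres are not needed.

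\textbf{How the paper gets the clique bound.} It argues locally. Suppose an endpoint $v$ met three matching edges $u_iu_i'$. Then $C_3,C_4\nsubseteq G$ forces the witness for each pair $i<j$ to be $u_i'u_j'$, which produces a triangle. So every endpoint has at most two cross-neighbours, and there are at most $10$ cross edges, exactly one per pair. The resulting $15$ edges form a cubic graph of girth at least $5$, which the paper identifies as $P$ by distance layers.

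\textbf{How you get it.} You count globally. The ten endpoints span at least $5+10=15$ distinct edges and have girth at least $5$. You then invoke $\mathrm{ex}(10,\{C_3,C_4\})=15$ with $P$ as the unique extremal graph. This is correct, and the Moore-type derivation you allude to is short:
\begin{itemize}
\item girth at least $5$ gives $1+\sum_{u\in N(v)}d(u)\le10$ for each $v$, so $\sum_vd(v)^2\le90$;
\item Cauchy--Schwarz gives $\sum_vd(v)^2\ge(2m)^2/10\ge90$;
\item together these force $m=15$ and $3$-regularity.
\end{itemize}

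\textbf{What each route buys.} Your route makes the case analysis you sketched earlier unnecessary, and it replaces the paper's local triangle argument with a single inequality. The paper's route instead obtains cubicity from a purely local observation. Both routes finish with the same fact: a cubic graph of girth at least $5$ on $10$ vertices is the Petersen graph. The counting alone gives only regularity, so you must cite this uniqueness or prove it; the paper's distance-layer argument suffices.
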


\begin{proof}
A $K_5\subseteq H(G)$ gives $5$ disjoint edges of $G$ with an edge between each pair. An endpoint can't meet both ends of another matching edge. Suppose $v$ meets $u_1,u_2,u_3$ in distinct matching edges $u_iu_i'$. For $i<j$ the absence of $C_3,C_4$ gives
$$u_iu_j,u_iu_j',u_i'u_j\notin E(G)\implies u_i'u_j'\in E(G).$$
So $u_1',u_2',u_3'$ form a $C_3$, contradiction. Each endpoint therefore has at most $2$ neighbors in the other matching edges. The $\binom52=10$ pairs each need an edge so equality holds at every endpoint. Together with the matching these edges form a cubic graph on $10$ vertices of girth at least $5$.\vspace{0.5em} \\
The distance layers from any vertex have sizes $1,3,6$. Every vertex in the last layer has degree $2$ within that layer so the $6$ vertices induce a $C_6$. Vertices with the same parent must be opposite on this $C_6$ to avoid $C_3,C_4$. These adjacencies give $P\subseteq G$, contradiction.\vspace{0.5em} \\
An independent $F\subseteq V(H(G))$ selects edges of $G$ containing no $P_4$ or $C_3$. Their components are stars with no edge of $G$ between distinct components. Take all leaves of each star with at least $2$ edges and one endpoint of each single edge. These vertices are independent and number $|F|$. So $\alpha(G)\ge|F|$.
\end{proof}

\begin{proof}[Proof of Theorem \ref{algorithmic}]
Take $z=\one$ in Subsection \ref{algorithm}. Then
$$2|E(G)|\ge\frac p2\left(\frac{n^2}{q}-(\sqrt q+1+C)n\right)-2R_qn.$$
Since $p=\Theta(q^{-1/7})$, $n=q^2-O(q)$ and $k\ge3$,
$$|V(H(G))|=|E(G)|\ge\frac{pq^3}{16}-O(q^{33/14}+q^{17/7+1/k})=\Omega(q^{20/7}).$$
$$\alpha(H(G))\le t_q=O(q^{11/7+1/k}).$$
Let $Q$ be the first tested field order with $t_Q\ge t$. For large $t$ its predecessor $q$ exists and $t_q<t$. So
$$t\le t_Q=O(Q^{11/7+1/k}),\,q\ge Q/9=\Omega\bigl(t^{7k/(11k+7)}\bigr).$$
Lemmas \ref{runtime} and \ref{superline} give polynomial running time, $\omega(H(G))\le4$ and $\alpha(H(G))<t$. Also
$$|V(H(G))|=\Omega\bigl(t^{20k/(11k+7)}\bigr),\,\frac{20}{11}-\frac{20k}{11k+7}=\frac{140}{11(11k+7)}\longrightarrow0.$$
Choose $k$ sufficiently large for the given $\varepsilon$.
\end{proof}

\subsection{Polynomial estimates}\label{polynomials}

Coordinates are algebraically independent over $\F_3$ if no nonzero polynomial over $\F_3$ vanishes at them.

\begin{lemma}\label{elimination}
Say a fixed polynomial system over $\F_3$ has at most $d$ algebraically independent coordinates in every solution over any extension field. Fixing $r\le d$ polynomial values leaves $O(q^{d-r})$ solutions over $\F_q$ outside the zeros of a nonzero polynomial in those values.
\end{lemma}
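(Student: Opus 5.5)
We read the statement as follows. $V\subseteq\overline{\F_3}^{\,m}$ is the variety cut out by the fixed system, and by hypothesis every irreducible component of $V$ has dimension at most $d$. We fix polynomials $g_1,\dots,g_r$ over $\F_3$ and values $\boldsymbol{u}=(u_1,\dots,u_r)\in\F_q^r$. The claim is that there is a nonzero polynomial $N(\boldsymbol{u})$, independent of $q$, such that whenever $N(\boldsymbol{u})\neq0$ the fibre $V\cap\{g=\boldsymbol{u}\}$ has $O(q^{d-r})$ points over $\F_q$. The plan is to prove this fibre by fibre on each component, using generic fibre dimension together with a Lang--Weil or B\'ezout-type point count. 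Every constant must depend only on the system, not on $q$, because $h$ varies.

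\textbf{Step 1: decompose $V$.} Over $\overline{\F_3}$, write $V=V_1\cup\dots\cup V_s$ as a union of irreducible components, each of dimension $d_i\le d$. The number of components and their degrees are bounded in terms of the degrees of the defining polynomials, by B\'ezout. Each component is defined over some finite extension of $\F_3$. We then take $N$ to be the product of the polynomials $N_i$ produced in Step 2 for the individual components. To keep $N$ over $\F_3$, we replace it by the product of its Galois conjugates; this product is still nonzero.

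\textbf{Step 2: generic fibres of each component.} On a component $V_i$, consider the map $g:V_i\to\mathbb A^r$ and let $W_i$ be the Zariski closure of its image. There are two cases.
\begin{itemize}
\item If $\dim W_i<r$, then $W_i$ lies in the zero set of some nonzero polynomial, and we take $N_i$ to be that polynomial. Away from $N_i=0$ this component contributes no points at all.
\item If $\dim W_i=r$, then $g$ is dominant. By the theorem on fibre dimension there is a nonempty open set $U\subseteq\mathbb A^r$ over which every fibre has dimension $d_i-r\le d-r$. We take $N_i$ to be a nonzero polynomial vanishing on the complement of $U$.
\end{itemize}
In both cases each fibre over $N\neq0$ is a variety of dimension at most $d-r$. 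Moreover, its defining equations are the original system together with $g_j=u_j$, so their degrees are bounded independently of $\boldsymbol{u}$ and $q$.

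\textbf{Step 3: count points on a fibre.} We use the elementary bound that a variety of dimension $e$ in $\mathbb A^m$, defined by equations of degree at most $D$, has at most $c(m,D)\,q^e$ points over $\F_q$. This follows from Lang--Weil, or more elementarily by induction on dimension: project generically to $\mathbb A^e$, where the fibres are finite with size bounded by the degree. Applying this with $e\le d-r$ gives the claimed bound $O(q^{d-r})$.

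\textbf{Main obstacle.} The hard part is uniformity. The open set $U$ and the degree bounds must not depend on $q$ or on the characteristic extension. We would handle this by performing the elimination once over $\F_3$ (or $\overline{\F_3}$), for example via a Gr\"obner basis or Noether normalization of $V_i$ relative to $g$. This produces explicit equations and a single polynomial $N$ valid over every $\F_{3^h}$. Once that is in place, each use in Subsection \ref{polynomials} costs only a single computation, which is done before the search in Lemma \ref{runtime}.
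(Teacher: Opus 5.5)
Your argument is correct, but it takes a different route from the paper. You decompose $V$ into geometric components and get the exceptional polynomial from the generic fibre-dimension theorem. You then count $\F_q$-points on a fibre of dimension at most $d-r$ using $\#X(\F_q)\le\deg(X)\,q^{\dim X}$, with B\'ezout bounding the degree uniformly in $\boldsymbol u$.

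The paper avoids components and fibre-dimension theory altogether. It works over $\F_3(t_1,\dots,t_r)$. Either $1$ lies in the ideal, and the cleared denominator is the required polynomial. Or, for every set $S$ of $d-r+1$ coordinates, the elimination ideal contains a nonzero relation in the $X_j$, $j\in S$; otherwise a generic solution would have $d+1$ algebraically independent coordinates. Specialising where the denominators and leading coefficients do not vanish keeps these relations with the same leading monomials. The count then reduces the indicator polynomials of the $\F_q$-solutions modulo these relations and $X_i^q-X_i$. The remainders are linearly independent and supported on monomials with at most $d-r$ exponents $\ge M$, which gives $\sum_j\binom mj M^{m-j}q^j$.

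Your version is shorter and more conceptual, but it rests on black boxes that produce the polynomial only existentially. The paper's version is self-contained. It yields the polynomial directly over $\F_3$ (no Galois norm needed) together with explicit constants, which the algorithm needs when it fixes $C$. That effectivity is the real point behind your last paragraph, not uniformity in $q$: all your objects already live over $\overline{\F_3}\supseteq\F_q$ and never depend on $q$.

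Two small fixes. First, the system in Lemma \ref{rooted} contains an inequation (distinct first coordinates). You should absorb it as the paper does, via $zf=1$, or by passing to the Zariski closure. Second, your generic-projection sketch for the point count needs a map defined over $\F_q$. The cleaner induction slices by hyperplanes $x_i=c$ with $c\in\F_q$ and bounds the degree of each slice by B\'ezout.
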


\begin{proof}
For a condition $f\ne0$ add $zf=1$, where $z=1/f$ is uniquely determined. Let $m$ be the number of coordinates, including any added this way. Write all equations as $f_i=0$, including those fixing the polynomial values. Work over $\F_3(t_1,\ldots,t_r)$ with independent variables $t_1,\ldots,t_r$. If $1=\sum_i h_if_i$, multiply by a common denominator. The resulting nonzero polynomial in $t_1,\ldots,t_r$ must vanish at every solution.\vspace{0.5em} \\
Otherwise for each set $S$ of $d-r+1$ coordinates we have
$$0\ne\sum_i h_if_i\in\F_3(t_1,\ldots,t_r)[X_j:j\in S].$$
If not, the equations still can't express $1$ over $\F_3(t_1,\ldots,t_r,X_j:j\in S)$. They then have a solution in an extension of this field. Since the $t_i$ are polynomial functions of the solution coordinates their field contains $r+|S|=d+1$ algebraically independent elements, contradiction. We use the polynomial test and algorithm in \cite[Chapters 1 and 2]{CLO}.\vspace{0.5em} \\
Order monomials first by total degree and then lexicographically. Clear all denominators above and let $R$ be their product times the resulting leading coefficients. If $R\ne0$ after fixing the parameter values then no leading coefficient vanishes, so the leading monomials stay the same. Choose $M$ larger than every exponent in them. \vspace{0.5em} \\
Add $X_i^q-X_i=0$ for $1\le i\le m$. After division, every remaining monomial $X_1^{a_1}\cdots X_m^{a_m}$ satisfies
$$0\le a_i<q,\,|\{i:a_i\ge M\}|\le d-r.$$
Indeed if $a_i\ge M$ for $d-r+1$ coordinates, the leading monomial of the relation on those coordinates divides $X_1^{a_1}\cdots X_m^{a_m}$, so that monomial reduces more. \vspace{0.5em} \\
For each solution $a\in\F_q^m$ the polynomial $\prod_{i=1}^m\bigl(1-(X_i-a_i)^{q-1}\bigr)$ is $1$ at $a$ and $0$ at every other solution. Its remainder has the same values. These remainders are linearly independent, so
$$\bigl|\{a\in\F_q^m:f_i(a)=0\text{ for all }i\}\bigr|\le\sum_{j=0}^{\min(d-r,m)}\binom mj M^{m-j}q^j=O(q^{d-r}).$$
\end{proof}

Write $a_i=(x_i,y_i)$ and $b_i=(z_i,f(x_i+z_i)-y_i)$. The outer $C_5$ determines $y_1,\ldots,y_5$ by
$$2y_1=f(x_1+x_2)-f(x_2+x_3)+f(x_3+x_4)-f(x_4+x_5)+f(x_5+x_1).$$
$$y_{i+1}=f(x_i+x_{i+1})-y_i\text{ for }1\le i\le4.$$
Let $F_{ij}=f(z_i+z_j)-f(x_i+z_i)-f(x_j+z_j)+y_i+y_j$. For the inner edges we must have $F_{13}=F_{14}=F_{35}=F_{24}=F_{25}=0$.

\begin{lemma}\label{rooted}
There are nonzero polynomials $R_e(x_0,y_0,x_1)$ and $R_p(x_0,y_0,x_1,x_2)$ giving $O(q^2)$ Petersen embeddings through a fixed edge at $R_e\ne0$ and $O(q)$ through a fixed $P_3$ at $R_p\ne0$.
\end{lemma}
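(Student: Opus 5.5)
The plan is to apply Lemma \ref{elimination} to the polynomial system describing a Petersen embedding in $B$. I use the coordinates just introduced: $x_1,\ldots,x_5$ for the outer vertices, $y_1,\ldots,y_5$ determined by the outer $C_5$, and $z_1,\ldots,z_5$ for the inner vertices. The inner edge equations are $F_{13}=F_{14}=F_{35}=F_{24}=F_{25}=0$.

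Add the conditions $x_i\ne x_{i+1}$, $x_i\ne z_i$ and $z_i\ne z_{i+2}$ as $f\ne0$ constraints, so that every Petersen edge is a genuine edge of $B$. Injectivity is then only needed for the final count, and dropping it can only enlarge the solution set. With these constraints an embedding is exactly a solution in the $10$ free coordinates $(x_1,\ldots,x_5,z_1,\ldots,z_5)$ of $5$ equations. The target is the generic count $d=5$ algebraically independent coordinates in every solution over every extension field.

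Once $d\le5$ is established, both claims follow from Lemma \ref{elimination}. By the edge-transitivity of $P$ we may fix the edge to be $a_1a_2$. Fixing $x_1,y_1,x_2$ gives $r=3$ polynomial values, since $y_1$ is a polynomial in the coordinates by the displayed formula and division by $2$ is allowed. This leaves $O(q^{2})$ solutions off the zero set of a nonzero polynomial $R_e(x_0,y_0,x_1)$. Any edge of $P$ can be mapped to $a_1a_2$, and the finitely many labelings are absorbed into the constant.

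For a $P_3$ we can take the path $a_5a_1a_2$, because the automorphism group of $P$ is transitive on paths of length $2$. We fix $x_1,y_1,x_2,x_5$, which is $r=4$ values. This yields $O(q)$ solutions off the zero set of $R_p(x_0,y_0,x_1,x_2)$. The orientation issue of which endpoint plays the role of $x_1$ is handled by taking a product over the finitely many labelings, which keeps the polynomial nonzero.

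The main obstacle is verifying the hypothesis of Lemma \ref{elimination}: every solution over every extension of $\F_3$ must have at most $5$ algebraically independent coordinates. In other words, the $5$ equations $F_{ij}=0$ must cut every component of the variety down by exactly $5$ dimensions, with no excess components. My first attempt would be a triangular elimination.

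First I fix $x_1,\ldots,x_5$ generically. Each equation $F_{ij}=0$ reads $f(z_i+z_j)-f(x_i+z_i)-f(x_j+z_j)=c_{ij}$. I then solve along the inner $5$-cycle $z_1z_3z_5z_2z_4$, which is the cycle formed by the inner edges.

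The key input is the Coulter–Matthews property, which makes $u\mapsto f(u+c)-f(u)$ a permutation for $c\ne0$. I would use this to show that each equation, viewed as a polynomial in one new variable over the field generated by the previous ones, is nonconstant. That nonconstancy would give a transcendence degree bound of $5$ for any solution.

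The delicate cases are degenerate ones, such as $x_i+z_i=z_i+z_j$, where an equation may become identically satisfied. For each such case I would either show that it forces an extra algebraic relation elsewhere, or check directly that its transcendence degree is still at most $5$. If a uniform argument fails, the fallback is a finite case analysis, or a Gröbner basis computation over $\F_3$ with $f=X^{14}$, to certify the dimension bound.
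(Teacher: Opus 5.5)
\textbf{There is a genuine gap: your mechanism gives transcendence degree at most $6$, not $5$.}

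Your outer reduction matches the paper's: Lemma \ref{elimination} with $d=5$ and $r=3,4$. Using arc- and $2$-arc-transitivity of $P$ instead of multiplying over the $30$ oriented edges and $60$ ordered $P_3$ is fine. The failure is in the step you yourself call the main obstacle. Along the inner cycle $z_1z_3z_5z_2z_4$, the five equations $F_{13},F_{35},F_{25},F_{24},F_{14}$ introduce only four new variables. Once $z_1$ is chosen, nonconstancy makes $z_3,z_5,z_2,z_4$ algebraic over $\F_3(x_1,\ldots,x_5,z_1)$. Nonconstancy here is just the leading coefficient $2(z_i-x_j)$ of $F_{ij}$ in $z_j$.

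The closing equation involves no new variable. The whole content of the lemma is that it still imposes a nontrivial relation on $(x_1,\ldots,x_5,z_1)$; equivalently, $b_2$, with three already placed neighbours $a_2,b_4,b_5$, is overdetermined. Nothing in your plan proves this.

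The paper obtains it by explicit elimination:
\begin{itemize}
\item $R_5=\Res_{z_3}(F_{13},F_{35})$, $R_2=\Res_{z_4}(F_{14},F_{24})$ and $R_{25}=\Res_{z_5}(R_5,F_{25})$;
\item a computer check at $(0,1,a,a^2,a^3,2)$ over $\F_{81}$ that the degrees reach $169,169,2197$ and that $\Res_{z_2}(R_2,R_{25})\ne0$.
\end{itemize}
So $\Res_{z_2}(R_2,R_{25})$ is a nonzero polynomial in $x_1,\ldots,x_5,z_1$ vanishing on every solution. Your Gr\"obner fallback points this way, but as written the key step is deferred.

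The Coulter--Matthews property is also the wrong input. The hypothesis of Lemma \ref{elimination} concerns solutions over arbitrary extensions such as $\overline{\F_3}$. There $u\mapsto f(u+c)-f(u)$ has degree $13$ and is not injective, so planarity says nothing.

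\textbf{Dropping injectivity makes the problem harder, not easier.} The triangular step needs $2(z_1-x_3)$, $2(z_1-x_4)$, $2(z_3-x_5)$ and $2(z_4-x_2)$ to be nonzero. Without injectivity these vanish on genuine solutions. For example, solutions with $z_1=x_3$ are exactly those with $b_1=a_3$, since there $F_{13}=y_1+y_3-f(x_1+x_3)$. You leave such cases to an unspecified case analysis.

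The paper removes them for free. A common neighbour of $(x,y)$ and $(x,y')$ forces $y=y'$, and every nonadjacent pair in $P$ has a common neighbour. Hence an injective embedding has all ten first coordinates distinct. These distinctness conditions can be imposed as $\ne$ constraints, which Lemma \ref{elimination} allows.
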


\begin{proof}
A common neighbor $(a,b)$ of $(x,y),(x,y')$ gives $y+b=f(x+a)=y'+b$, hence $y=y'$. Adjacent vertices of $P$ have different first coordinates in $B$ and every nonadjacent pair shares a neighbor. So all $10$ first coordinates of an embedding are distinct. In particular
$$(z_1-x_3)(z_1-x_4)(z_3-x_5)(z_4-x_2)\ne0.$$
In order $F_{13},F_{14},F_{35},F_{24}$ have degree $13$ in $z_3,z_4,z_5,z_2$ with nonzero leading coefficients $2(z_1-x_3)$, $2(z_1-x_4)$, $2(z_3-x_5)$ and $2(z_4-x_2)$. So those coordinates are algebraic over $x_1,\ldots,x_5,z_1$.\vspace{0.5em} \\
For $\deg g=m$ and $\deg h=n$, the resultant $\Res_X(g,h)$ is the determinant whose rows are the coefficients of $X^{n-1}g,\ldots,g,X^{m-1}h,\ldots,h$ in descending powers. A common root makes it $0$ \cite[Chapter 3]{CLO}. Let
$$R_5(z_5)=\Res_{z_3}(F_{13},F_{35}).$$
$$R_2(z_2)=\Res_{z_4}(F_{14},F_{24}).$$
$$R_{25}(z_2)=\Res_{z_5}(R_5,F_{25}).$$
Every solution satisfies $\Res_{z_2}(R_2,R_{25})=0$. Since $F_{13}$ doesn't involve $z_5$ and $F_{14},R_5$ don't involve $z_2$, we have $\deg R_5,\deg R_2\le13^2=169$ and $\deg R_{25}\le13^3=2197$.
Let $a\in\F_{81}$ satisfy the irreducible polynomial $a^4+a+2=0$. Code was used to show that at $(x_1,x_2,x_3,x_4,x_5,z_1)=(0,1,a,a^2,a^3,2)$
$$\deg R_5=\deg R_2=169,\,\deg R_{25}=2197.$$
$$\Res_{z_2}(R_2,R_{25})=1+2a+2a^2+2a^3\ne0.$$
The degrees hit their bounds. So substitution preserves the determinant sizes and the last resultant is a relation on $x_1,\ldots,x_5,z_1$. At most $5$ coordinates are algebraically independent.\vspace{0.5em} \\
An oriented edge fixes $3$ polynomial values and an ordered $P_3$ fixes $4$. Apply Lemma \ref{elimination} with $d=5$ to each of the $30$ oriented edges and $60$ ordered $P_3$ in $P$. Let $R_e,R_p$ be the products of the corresponding nonzero polynomials. At $R_e\ne0$ the total is $30O(q^{5-3})=O(q^2)$. At $R_p\ne0$ it is $60O(q^{5-4})=O(q)$.
\end{proof}
As a concluding remark, we believe planar functions may have some properties yet to fully exploit in Ramsey Theory.
\section*{Acknowledgments}
We are grateful to Dr. Jacques Verstraete for his valuable discussions, guidance, and reviews of some writeups.

\section{Note on AI Usage}

For the constructive result, the authors suggested $D^*(3,q)$ as a starting point for $r(4,t)$. ChatGPT proposed a construction matching the exponent $5/3$ in \cite{HIVM26} and checked small cases. The authors worked out all mathematical proofs for the constructive result. This was easily adapted to $r(5,t)$.\vspace{0.5em} \\
For the algorithmic result, ChatGPT proposed the algorithm and ran computational tests. Around half of the mathematical work for the algorithm was done by the authors, with AI assistance in the proofs in Subsection 3.3. AI assistance was also used in typsetting the final paper.

\end{document}